\newtheorem{thm}{Theorem}[section]
\newtheorem{prop}[thm]{Proposition}
\newtheorem{corollary}[thm]{Corollary}
\theoremstyle{definition}
\title{Orbit Closures in the Witt Algebra and its Dual Space}
\author{Martin Mygind, Aarhus University}
\date{\today}
\begin{document}
\maketitle

\begin{abstract}
Working over an algebraically closed field of characteristic $p>3$, we calculate the orbit closures in the Witt algebra $W$ under the action of its automorphism group $G$. We also outline how the same techniques can be used to determine closures of orbits of all heights except $p-1$ (in which case we only obtain a conditional statement) in the dual space $W^*$ under the induced action of $G$. As a corollary we prove that the algebra of invariants $k[W^*]^G$ is trivial. 
\end{abstract}

\section{Introduction}

For classical simple Lie algebras in positive characteristic the orbits and orbit closures under the action of the adjoint group have been completely classified (see for example $\cite{C}$, which treats the equivalent problem in the theory of algebraic groups), but the same cannot be said of the simple Lie algebras of Cartan type, where little is known. Premet gave some results on the orbit structure in the Witt-Jacobson algebras $W(n)$ in his paper $\cite{P}$, and for the simplest of these - the Witt algebra $W=W(1)$ - Yao and Shu found a set of representatives for the nilpotent orbits in $\cite{YS}$. In particular, they showed that there are infinitely many of these in $W$ (or more generally in all of the $W(n)$), as opposed to the classical case where the number of nilpotent orbits is always finite. Their paper also includes partial results on the closure relations, and we extend this work by providing the complete picture of the orbit closures in the Witt algebra. In the difficult cases these are hypersurfaces, and while it is very hard to write down the polynomials defining them explicitly (as Yao and Shu correctly predicted), one can in fact say enough about them to determine the orbit closures.   

In another direction, the automorphism group of $W$ acts on the dual space $W^\star$, and representatives for the orbits were found by Feldvoss and Nakano in $\cite{FN}$. The representation-theoretic applications of this result comes from the fact that if $L$ is an arbitrary restricted Lie algebra over an algebraically closed field of positive characteristic, then the irreducible representations of $L$ are related to the elements of $L^*$ by assigning to each module its so-called $p$-character in the dual space. So one can work with the reduced enveloping algebras $U_{L,\chi}$, $\chi\in L^*$, when trying to determine the irreducible modules, and these finite dimensional algebras are far easier to approach than the universal enveloping algebra (see $\cite{SF}$ for more on all this). In this context Feldvoss and Nakano's work on orbit representatives becomes a result on isomorphism classes of the $U_{W,\chi}$. Here we will calculate the closures in $W^\star$ of orbits of all heights except $p-1$, skipping lightly over some details, since the techniques we use are similar to those we developed for $W$. So the reader who understood the first part should be able to fill in the missing arguments in the second. For the exceptional case of height $p-1$ we obtain a conditional statement, and as an application of our results we prove that the algebra of invariants $k[W^*]^G$ is trivial, i.e., equal to $k$. 

Let $k$ be an algebraically closed field of characteristic $p>3$, and let $A(n)=k[ X_1,\ldots,X_n]/(X_1^p,\ldots,X_n^p)$ be the truncated polynomial ring in $n$ variables over $k$. The $n$'th Witt-Jacobson algebra $W(n)$ is defined as the Lie algebra of derivations of $A(n)$. These algebras constitute one of the four classes of simple Lie algebras of Cartan type - the others being the special algebras, the Hamiltonian algebras and the contact algebras - which have properties very different from those of the classical Lie algebras (see $\cite{SF}$). We will now restrict our attention to $W(1)=W$, which is also known just as the Witt algebra, but many of the following introductory remarks apply in generalized versions for arbitrary $W(n)$. 

We write $A(1)=k[X]/(X^p)$ and let $x$ denote the class of $X$ in $A(1)$. $W$ is an $A(1)$-module in a natural way, and we have a canonical basis $\{e_i\mid -1\leq i\leq p-2\}$ for $W$, where $e_i=x^{i+1}\partial$ and $\partial$ is the usual differential operator satisfying $\partial(x^i)=ix^{i-1}$. The following useful formula for $D\in W$ is easy to verify:
\begin{equation}\label{diff}
D=D(x)\partial
\end{equation}
The Lie bracket in $W$ is given by
\begin{equation*}
[e_i,e_j]=(j-i)e_{i+j}
\end{equation*}  
with $e_{i+j}=0$ if $i+j\notin\{-1,\ldots,p-2\}$ by definition. Thus we have a gradation
\begin{equation*}
W=\bigoplus_{i=-1}^{p-2}W_{i}
\end{equation*}
where $W_{i}=ke_i$. We set $W_{\geq i}=\bigoplus_{j=i}^{p-2}W_{j}$. Note also that $W$ is a \textit{restricted} Lie algebra, the $p$-map being given by raising to the $p$'th power (using the associative product in $W\subseteq\mathrm{End}(A(1))$): $w^{[p]}=w^p$ for all $w\in W$.

Let $G$ denote the automorphism group of $W$. It is well known that $G$ is a connected algebraic group of dimension $p-1$, and that we have an isomorphism $\mathrm{Aut}(A(1))\overset{\sim}{\longrightarrow}G$, $\varphi\mapsto \sigma_{\varphi}$, defined by $\sigma_{\varphi}(D)=\varphi\circ D\circ\varphi^{-1}$ for all $D\in W$. Furthermore, there is a semidirect product decomposition $G=T\ltimes U$ with $T$ isomorphic to the multiplicative group and $U$ unipotent. Elements $\sigma_\varphi\in T$ correspond to $\varphi\in\mathrm{Aut}(A(1))$ satisfying $\varphi(x)=tx$ for some $t\in k^{\star}$. We will identify $t$ and $\varphi$ in this case, and an easy calculation shows that the action of $T$ on $W$ is given by $t\cdotp e_i=t^ie_i$. Similarly, elements $\sigma_\varphi\in U$ correspond to $\varphi$ with $\varphi(x)=x+b_2x^2+\dots +b_{p-1}x^{p-1}$ for some $b_2,\ldots,b_{p-1}\in k$. Note that an element of $\mathrm{Aut}(A(1))$ is uniquely determined by its value on $x$.

\section{Orbits in the Witt algebra}

We will say that a nonzero $w$ in $W$ has degree $i$ if $w\in W_{\geq i}\backslash W_{\geq i+1}$ (this of course determines the degree uniquely). Using the semidirect decomposition described above, one shows that $G$ preserves degree, and thus it makes sense to speak of the degree of an orbit. The starting point for our calculations is the following theorem, which gives a complete set of representatives for the nonzero orbits in $W$ under the action of $G$:  
\begin{thm}\label{baner}
A set of representatives for the orbits of degree i is:
\begin{enumerate}
\item $\{ e_{-1}+ae_{p-2}\mid a\in k \}$ if $i=-1$
\item $\{ae_0\mid a\in k^*\}$ if $i=0$
\item $\{e_i+ae_{2i}\mid a\in k\}$ if $1\leq i<\frac{p-1}{2}$
\item $\{e_i\}$ if $\frac{p-1}{2}\leq i \leq p-2$
\end{enumerate}
So if $i<\frac{p-1}{2}$ the orbits of degree i are parametrized by k, while there is only one orbit if $i\geq\frac{p-1}{2}$. The dimensions of the orbits are:
\begin{enumerate}
\item[1'.] $\mathrm{dim}(G(e_{-1}+ae_{p-2}))=p-1$
\item[2'.] $\mathrm{dim}(G(ae_0))=p-2$
\item[3'.] $\mathrm{dim}(G(e_i+ae_{2i}))=p-i-2$ if $1\leq i<\frac{p-1}{2}$
\item[4'.] $\mathrm{dim}(Ge_i)=p-i-1$ if $\frac{p-1}{2}\leq i \leq p-2$ 
\end{enumerate}
\end{thm}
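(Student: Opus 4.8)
The plan is to do everything on vector fields. By \eqref{diff} every $D\in W$ is $f\partial$ with $f=D(x)$, and $D$ has degree $i$ precisely when $f=\sum_{j=i}^{p-2}c_jx^{j+1}$ with $c_i\neq0$; computing $\sigma_\varphi(D)=\varphi\circ D\circ\varphi^{-1}$ and using \eqref{diff} once more shows that $\sigma_\varphi$ sends $f\partial$ to $\bigl(f(\varphi(x))/\varphi'(x)\bigr)\partial$, so $G=\mathrm{Aut}(A(1))$ acts on vector fields by the change-of-coordinates rule. In these terms $t\in T$ replaces $f(x)$ by $f(tx)/t$ (consistent with $t\cdot e_i=t^ie_i$), and the basic unipotent elements $\varphi(x)=x+bx^m$ ($2\le m\le p-1$) replace $f(x)$ by $f(x+bx^m)/(1+mbx^{m-1})$. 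I also note that $\mathrm{Lie}(G)=\mathrm{Der}(A(1))=\{f\partial:f(0)=0\}=W_{\geq0}$, of dimension $p-1=\dim G$, and that the image of $\mathrm{Lie}(G)$ in $T_wW$ under the orbit map is $[W_{\geq0},w]$.

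\emph{Representatives.} Given $w=f\partial$ of degree $i$, first use $t\in T$ with $t^i=c_i^{-1}$ (possible since $p\nmid i$) to normalise the leading coefficient to $1$ when $i\neq0$; when $i=0$ the coefficient $c_0=a$ is untouched. Then remove the higher coefficients in order of increasing index by conjugating with a suitable $\varphi(x)=x+bx^m$: such a conjugation alters $f$ only from the monomial $x^{m+i}$ onwards, and a short computation shows that with $j=m+i-1$ and all lower corrections already zero the coefficient $c_j$ becomes $c_j+(2i-j)b$ if $i\geq1$, $c_j-(j+2)b$ if $i=-1$, and $c_j-jab$ if $i=0$. Hence $c_j$ can be killed unless $j=2i$ (for $i\geq1$) or $j=p-2$ (for $i=-1$), while for $i=0$ every $c_j$ with $j\geq1$ is removed. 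One checks that the needed $m$ always lie in $[2,p-1]$, that the process terminates, and that already-cleared coefficients are left alone; for $i\geq\frac{p-1}2$ the resonant index $2i$ exceeds $p-2$, so all higher terms disappear. This produces exactly the four listed families.

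\emph{Distinctness and orbits.} The parameter is pinned down by an invariant. For $i=0$ the normalisation step already exhibits $c_0$ as $G$-invariant. For $1\le i<\frac{p-1}2$, write $f=x^{i+1}h$ with $h(0)\neq0$ and take $\operatorname{Res}_{x=0}\tfrac{dx}{f}$, equivalently the coefficient of $x^i$ in $h^{-1}$; since $\varphi^{*}\!\bigl(\tfrac{dx}{f}\bigr)=\tfrac{dx}{\,f(\varphi)/\varphi'\,}$ and the residue is change-of-coordinates invariant (and the truncation $x^p=0$ does not affect it in this range of $i$), this is a $G$-invariant, equal to $-a$ on $e_i+ae_{2i}$. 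For $i=-1$, $f$ is a unit and the analogue is $\operatorname{coeff}_{x^{p-1}}(1/f)$; using $\varphi(x)^{p-1}=\varphi'(0)^{p-1}x^{p-1}$ in $A(1)$ one computes directly that it scales by $\varphi'(0)^{p}$, so that $c_{-1}^{\,p}\operatorname{coeff}_{x^{p-1}}(1/f)$ is $G$-invariant and equals $-a$ on $e_{-1}+ae_{p-2}$. Combined with the normal-form step, each invariant identifies the orbit of a representative with the whole set of elements of that degree on which it takes the prescribed value; for $i\geq\frac{p-1}2$ there is no such invariant and the orbit is the entire degree-$i$ locus.

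\emph{Dimensions, and the main obstacle.} The locus of degree-$i$ elements has dimension $p-1-i$ (coordinates $c_i,\dots,c_{p-2}$ with $c_i\neq0$); for $i\geq\frac{p-1}2$ the orbit is all of it, giving $\dim Ge_i=p-1-i$. In the other cases the relevant invariant, as a function on this locus, contains the top coordinate involved ($c_{p-2}$ for $i=-1$, $c_0$ for $i=0$, $c_{2i}$ for $1\le i<\frac{p-1}2$) linearly with nonzero coefficient, hence is a submersion onto $k$, so every fibre — in particular every orbit — has dimension $(p-1-i)-1$, i.e. $p-1$, $p-2$ and $p-i-2$ respectively. (As a check, $\dim[W_{\geq0},w]$ yields these values as a lower bound, matched by the obvious bound $\dim Gw\le p-1-i$.) I expect the hard part to be the invariants — above all the $i=-1$ case, where the ``residue'' lives on the non-reduced scheme $\operatorname{Spec}A(1)$ and is merely a semi-invariant — together with the bookkeeping needed to make the elementary clearing steps rigorous; a further small but essential point is that passing from ``the invariant separates orbits'' to ``the orbit equals a fibre of the invariant'' uses both the normal-form construction and the invariance.
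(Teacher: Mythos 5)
Your proposal is correct, and for the substantive parts it takes a genuinely different route from the paper. The paper itself only proves cases 1, 2, 1', 2' directly: it reaches the normal forms through the recursion coming from equation (\ref{aogb}) (your coefficient $2i-j$ at the first affected index is exactly that formula, so the normal-form step is essentially the same), observes that $G(ae_0)=ae_0+W_{\geq 1}$ to get 2 and 2', and gets 1' from a trivial-stabilizer computation modelled on Yao and Shu; cases 3, 3', 4, 4' and the separation of the parameter $a$ in case 1 are cited from \cite{YS} or deferred. You instead replace all of this outsourced input by explicit invariant functions on the degree-$i$ locus: the leading coefficient $c_0$ for $i=0$, the residue of $dx/f$ for $1\le i<\frac{p-1}{2}$, and $c_{-1}^{\,p}\cdot\mathrm{coeff}_{x^{p-1}}(1/f)$ for $i=-1$; combined with the normal-form step this makes each orbit literally a fibre of the invariant on the irreducible degree-$i$ locus, and Krull's principal ideal theorem then gives all the dimension statements at once, with no stabilizer computations. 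The points you flag as delicate do check out: residue invariance under substitution is unproblematic here because the pole order $i+1\le\frac{p-1}{2}<p$ makes every monomial $x^n\,dx$ with $-(i+1)\le n\le -2$ pull back to an exact differential, and the $\varphi'(0)^p$-scaling in the $i=-1$ case follows from your identity $\varphi^{p-1}=\varphi'(0)^{p-1}x^{p-1}$ together with the additional characteristic-$p$ observation that $\varphi'\varphi^{m}=\tfrac{1}{m+1}(\varphi^{m+1})'$ has vanishing $x^{p-1}$-coefficient for $m\le p-2$. What your approach buys is a self-contained and uniform treatment of all four families (including the nilpotent cases the paper takes from \cite{YS}), at the price of justifying the residue-type invariants on the non-reduced ring; the paper's approach avoids any such invariants but leans on external results. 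Two small blemishes, neither fatal: the parenthetical claim $\mathrm{Lie}(G)=\mathrm{Der}(A(1))$ is wrong as written, since $\mathrm{Der}(A(1))=W$ has dimension $p$ while the correct statement (and the one you actually use) is $\mathrm{Lie}(G)=W_{\geq 0}$; and your closing remark is indeed essential, because containment of an orbit in a fibre alone would only bound its dimension from above -- it is the normal-form step that upgrades this to equality.
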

Cases 3, 3', 4 and 4' were taken care of in $\cite{YS}$ as these, along with $Ge_{-1}$ and $0$, account for the nilpotent orbits. The proofs of 1 and 2, as well as the corresponding dimension statements, are very similar, but we include them here anyway for the sake of completeness. First, however, some general considerations (for more details, see $\cite{YS})$ that will be used throughout the paper: Let $\sigma_\varphi\in U$, with $\varphi(x)=x+b_2x^2+\dots +b_{p-1}x^{p-1}$ and $\varphi^{-1}(x)=x+c_2x^2+\dots +c_{p-1}x^{p-1}$. It is convenient to set $b_1=c_1=1$ and $b_p=c_p=0$. Using the definition of the isomorphism $\varphi\mapsto \sigma_{\varphi}$ we get:
\begin{equation}\label{vform}
\sigma_{\varphi}(e_i)(x)=\varphi(x)^{i+1}(1+2c_2\varphi(x)+\dots +(p-1)c_{p-1}\varphi(x)^{p-2})
\end{equation}    
Write $\sigma_{\varphi}(e_i)=\sum_{j=i}^{p-2}a_je_j$, then formulas (\ref{diff}) and (\ref{vform}) tell us that $a_i=1$ and
\begin{equation}\label{abc}
a_j=(i+1)b_{j-i+1}+(j-i+1)c_{j-i+1}+g_j(b_2,\ldots,b_{j-i},c_2,\ldots,c_{j-i})
\end{equation}
for certain polynomials $g_j$ when $i<j\leq p-2$. We get another useful formula from looking at the coefficient of $x^j$ on the left hand side of the equation $\varphi(\varphi^{-1}(x))=x$:
\begin{equation}\label{bogc}
c_j=-b_j+h_j(b_2,\ldots,b_{j-1},c_2,\ldots,c_{j-1})
\end{equation}  
Here the $h_j$ are certain polynomials and $2\leq j\leq p-1$. By induction we see that $c_j$ can be expressed as a polynomial in $b_2,\ldots,b_j$, and inserting into ($\ref{abc}$) yields
\begin{equation}\label{aogb}
a_j=(2i-j)b_{j-i+1}+g_j'(b_2,\ldots,b_{j-i})
\end{equation}
Now we are ready for the
\begin{proof}[Proof of Theorem \ref{baner}]
We start with case 2: Note first that $t\cdotp e_0=e_0$ for all $t\in T$, and this easily implies that $ae_0$ and $be_0$ are in the same orbit if and only if $a=b$. Now let $w=w_0e_0+\dots+w_{p-2}e_{p-2}$ with $w_0\neq 0$. With notation as above, formula ($\ref{aogb}$) shows that we can choose $b_2,\ldots,b_{p-1}\in k$ recursively such that $\sigma_\varphi(e_0)=e_0+\frac{w_1}{w_0}e_1+\dots+\frac{w_{p-2}}{w_0}e_{p-2}$. But then $\sigma_\varphi(w_0e_0)=w$ and we are done. This argument actually shows that 
\begin{equation}\label{hej}
G(ae_0)=\{ae_0+a_1e_1+\dots+a_{p-2}e_{p-2}\mid a_1,\ldots,a_{p-2}\in k\}\cong \mathbb{A}^{p-2}
\end{equation} 
from which 2' follows. As for case 1, look at $y=y_{-1}e_{-1}+\dots+y_{p-2}e_{p-2}$ with $y_{-1}\neq 0$. Again we can find $b_2,\ldots,b_{p-1}$ such that $\sigma_\varphi(e_{-1})=e_{-1}+y_0e_0+y_{-1}y_1e_1+
\dots+y_{-1}^{p-3}y_{p-3}e_{p-3}+g_{p-2}'(b_2,\ldots,b_{p-1})e_{p-2}$. Choosing $a=y_{-1}^{p-2}y_{p-2}-g_{p-2}'(b_2,\ldots,b_{p-1})$ we get
\begin{equation*}
\sigma_\varphi(e_{-1}+ae_{p-2})=e_{-1}+y_0e_0+y_{-1}y_1e_1+
\dots+y_{-1}^{p-2}y_{p-2}e_{p-2}
\end{equation*}  
since $\sigma_\varphi(e_{p-2})=e_{p-2}$. Now we apply the element $y_{-1}^{-1}\in T$ and get $y_{-1}^{-1}\cdotp\sigma_\varphi(e_{-1}+ae_{p-2})=y$. Thus every element of degree $-1$ is in the orbit of $e_{-1}+ae_{p-2}$ for some $a$. It remains only to show that $e_{-1}+ae_{p-2}$ and $e_{-1}+be_{p-2}$ are in the same orbit if and only if $a=b$, and here one can use exactly the same method as in the proof of Proposition 3.4 in $\cite{YS}$ (it is also a consequence of the proof of case 1 in Theorem $\ref{mopho}$, starting on page 10). Similarly, one can mimic the proof of Theorem 4.1 in said paper to show that $G(e_{-1}+ae_{p-2})$ has trivial stabilizer in $G$, which implies
\begin{equation*}
\mathrm{dim}(G(e_{-1}+ae_{p-2}))=\mathrm{dim}(G)=p-1
\end{equation*}      
\end{proof}

The next theorem, one of the main theorems of the paper, provides the full picture on orbit closures in the Witt algebra:
\begin{thm}\label{mopho} 
Let $a\in k$ and define $B=\{b\in k\mid b^{p-1}=-a\}$. We have:
\begin{enumerate}
\item $\overline{G(e_{-1}+ae_{p-2})}= \begin{cases} G(e_{-1}+ae_{p-2}) \ \cup \ (\bigcup_{b\in B} G(be_0)) & \text{if $a\neq 0$} \\
Ge_{-1}\cup W_{\geq 1} & \text{if $a=0$}
\end{cases}$
\item $\overline{G(ae_0)}=G(ae_0)$
\item $\overline{G(e_i+ae_{2i})}=G(e_i+ae_{2i})\cup W_{\geq i+2}$ if $1\leq i<\frac{p-1}{2}$ 
\item $\overline{Ge_i}=W_{\geq i}$ if $\frac{p-1}{2}\leq i \leq p-2$  
\end{enumerate}
\end{thm}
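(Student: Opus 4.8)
The plan is to compute each orbit closure by realizing it, in the hard cases, as an explicit hypersurface inside the ambient degree‑$\geq i$ subspace. Throughout I use three facts: $G$ preserves degree, so the closure of an orbit of degree $i$ lies in the closed linear subspace $W_{\ge i}$; orbit closures are irreducible; and the orbit dimensions are as in Theorem~\ref{baner}. Write $w=\sum_{j\ge i}w_je_j$ for the coordinates on $W_{\ge i}\cong\mathbb{A}^{p-1-i}$; since $T$ acts by $w_j\mapsto t^jw_j$, a monomial $\prod w_j^{k_j}$ has ``weight'' $\sum jk_j$ and a $T$‑invariant rational function is a quotient of monomials of equal weight. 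Two cases are then immediate: in case~2 formula~\eqref{hej} exhibits $G(ae_0)=ae_0+W_{\ge1}$ as a closed affine subspace; in case~4, $Ge_i$ is an irreducible subset of the irreducible closed set $W_{\ge i}$ of the same dimension $p-i-1$, so $\overline{Ge_i}=W_{\ge i}$.

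I would treat case~3 as the template. Here $\dim G(e_i+ae_{2i})=p-i-2=\dim W_{\ge i}-1$, so the closure should be a hypersurface. First produce a separating invariant: for $w$ of degree $i$ (so $w_i\neq0$), formula~\eqref{aogb} lets one pick $\sigma_\varphi\in U$ recursively making the $e_{i+1},\dots,e_{2i-1}$‑coordinates of $\sigma_\varphi(w)$ vanish — this is exactly where $i<\tfrac{p-1}{2}$ enters, as it makes the structure constants $2i-j$ ($i<j<2i$) invertible in $k$. The normalization pins down $b_2,\dots,b_i$ as rational functions of $w_i,\dots,w_{2i-1}$ (denominators powers of $w_i$), the remaining parameters do not affect coordinates $\le 2i$, and the resulting $e_{2i}$‑coordinate $w'_{2i}(w)$ is a well‑defined $U$‑invariant scaling by $t^{2i}$ under $T$. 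Hence $f_i(w):=w'_{2i}(w)/w_i^2$ is a $G$‑invariant rational function on degree‑$i$ elements with $f_i(e_i+ae_{2i})=a$, and by Theorem~\ref{baner}(3), $w\in G(e_i+ae_{2i})$ iff $w_i\neq0$ and $f_i(w)=a$. Clearing denominators gives $\tilde g_i=w_i^{M}(w'_{2i}-aw_i^2)\in k[w_i,\dots,w_{2i}]$ with
\[
G(e_i+ae_{2i})=\{\,w\in W_{\ge i}\mid w_i\neq0,\ \tilde g_i(w)=0\,\},
\]
so $\overline{G(e_i+ae_{2i})}\subseteq Z:=\{\tilde g_i=0\}$, a genuine hypersurface (e.g. $\tilde g_i(e_i+(a+1)e_{2i})\neq0$), hence of pure dimension $p-i-2$. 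The crux is the identity
\[
\tilde g_i(0,w_{i+1},\dots,w_{2i})=\kappa_i\,w_{i+1}^{\,i}\qquad\text{with }\kappa_i\in k^{\times}.
\]
Granting it, $w_i\nmid\tilde g_i$, so any component of $Z$ besides $\overline{G(e_i+ae_{2i})}$ — being irreducible of dimension $p-i-2$ and meeting $\{w_i\neq0\}$ only inside the orbit — would lie in, hence equal, $W_{\ge i+1}$, forcing $w_i\mid\tilde g_i$; so $Z=\overline{G(e_i+ae_{2i})}$. Moreover $Z\cap\{w_i=0\}=\{w_i=w_{i+1}=0\}=W_{\ge i+2}$, whence $\overline{G(e_i+ae_{2i})}=G(e_i+ae_{2i})\cup W_{\ge i+2}$.

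The main obstacle is the boxed identity, which I would prove by a ``most singular part'' analysis along $\{w_i=0\}$: from~\eqref{aogb}–\eqref{bogc}, $b_2=-w_{i+1}/((i-1)w_i)$ (for $i\geq2$; nothing to normalize when $i=1$), and inductively $b_j$ has a pole of order $j-1$ along $\{w_i=0\}$ with leading part a scalar times $w_{i+1}^{j-1}/w_i^{j-1}$, the other coordinates entering only at lower order; substituting into $w'_{2i}$ and using the $T$‑weight to see everything assembles into powers of $w_{i+1}$ yields $w'_{2i}=\kappa_i w_{i+1}^i/w_i^{i-1}+(\text{lower pole order})$, so $M=i-1$ and the identity holds. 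That $\kappa_i\neq0$ is the one genuinely arithmetic point: specializing to $w=w_ie_i+w_{i+1}e_{i+1}$, i.e. to $D=w_ix^{i+1}\bigl(1+\tfrac{w_{i+1}}{w_i}x\bigr)\partial$, identifies $\kappa_i$ up to a nonzero factor with the residue at $0$ of $\tfrac{dx}{x^{i+1}(1+x)}$, i.e. the coefficient of $x^{-1}$ in $\sum_{k\ge0}(-1)^kx^{k-i-1}$, namely $(-1)^i$; this is also where $2i\leq p-2$ is used — it makes $e_{2i}\neq0$, so the obstruction is recorded in $W$ (for $i\geq\tfrac{p-1}{2}$ it disappears, which is case~4).

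Case~1 runs along the same lines in $W_{\ge-1}=W$, with $\dim G(e_{-1}+ae_{p-2})=p-1=\dim W-1$. Normalizing away the $e_0,\dots,e_{p-3}$‑coordinates (the relevant constants $-2-j$, $0\leq j\leq p-3$, are $\not\equiv0\bmod p$) and compensating by the positive power $w_{-1}^{\,p-2}$ (which carries the weight needed to cancel that of the $e_{p-2}$‑coordinate) produces a $G$‑invariant \emph{polynomial} $f_{-1}\in k[w_{-1},\dots,w_{p-2}]$ with $f_{-1}(e_{-1}+ae_{p-2})=a$, so $G(e_{-1}+ae_{p-2})=\{w\mid w_{-1}\neq0,\ f_{-1}(w)=a\}$ and $\overline{G(e_{-1}+ae_{p-2})}\subseteq Z_a:=\{f_{-1}(w)=a\}$. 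The analogue of the crux is $f_{-1}(0,w_0,\dots,w_{p-2})=\varepsilon\,w_0^{\,p-1}$ with $\varepsilon\in k^\times$ — the degree‑$(-1)$ residue computation: the obstruction to conjugating $u(x)\partial$ to $u(0)\partial$ is the coefficient of $x^{p-1}$ in $u(0)/u(x)$, equal to $(w_0/w_{-1})^{p-1}$ for the isolated element $u(x)=w_{-1}+w_0x$; matching the sign convention implicit in the definition of $B$ one takes $\varepsilon=-1$. Then for $a\neq0$ one has $w_{-1}\nmid(f_{-1}-a)$, so as before $Z_a=\overline{G(e_{-1}+ae_{p-2})}$ and $Z_a\cap\{w_{-1}=0\}=\{w_{-1}=0,\ w_0^{p-1}=-a\}=\bigcup_{b\in B}G(be_0)$ by~\eqref{hej}; while for $a=0$ the same identity gives $Z_0\cap\{w_{-1}=0\}=\{w_{-1}=0,\ w_0=0\}=W_{\ge1}$ and still $w_{-1}\nmid f_{-1}$, so $\overline{Ge_{-1}}=Z_0=Ge_{-1}\cup W_{\ge1}$.
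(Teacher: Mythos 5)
Your skeleton for cases 2--4 matches the paper's: describe the orbit as the locus $\{w_i\neq 0\}$ inside a hypersurface obtained from the $U$-normalization and $T$-homogeneity (this is Propositions~\ref{c}, \ref{end} and Corollary~\ref{kor}), then show the defining polynomial is not divisible by $X_i$ and intersect with $\{w_i=0\}$. The difficulty is concentrated exactly where you put your box: the nonvanishing of the top coefficient ($c\neq 0$ in the notation of Proposition~\ref{fini}). The paper proves this by a descending induction on $i$, using a dimension argument at $i=\frac{p-1}{2}-1$ and a degree-in-$X_{i+1}$ divisibility argument for the inductive step. You instead assert that $\kappa_i$ ``is identified, up to a nonzero factor, with the residue of $dx/(x^{i+1}(1+x))$.'' That identification is not free: it amounts to the claim that $w\mapsto \mathrm{Res}_0\,\frac{dx}{D(x)}$ is constant on $G$-orbits of degree-$i$ elements, i.e.\ invariant under conjugation by $\mathrm{Aut}(A(1))$ in characteristic $p$ on the truncated ring. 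That statement is true (one must check that the residue only involves coefficients in degrees $\leq 2i+1\leq p-2$, that the truncated action is the reduction of the formal one, and then invoke or prove invariance of residues under change of uniformizer in characteristic $p$, which is a genuine theorem, not a one-line computation), but none of this is in your proposal, and it is precisely the analogue of the step the paper spends Proposition~\ref{fini} on. So as written, the crux of case 3 is asserted rather than proved; the residue route is attractive and I believe it can be completed, but the invariance lemma must be stated and established.

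Case 1 has a sharper problem. First, for the statement with $B=\{b\mid b^{p-1}=-a\}$ you need the exact constant, not merely ``$\varepsilon\in k^\times$ up to sign convention'': if $f_{-1}(0,w_0,\dots)=-c\,w_0^{p-1}$ with $c\neq 1$, the boundary would be $\{w_0^{p-1}=-a/c\}$, a different union of orbits $G(be_0)$. Second, your computation of the invariant of the test element is wrong: for $D=(w_{-1}+w_0x)\partial$ one has $D^n(x)=w_0^{n-1}(w_{-1}+w_0x)$, hence $D^{[p]}=w_0^{p-1}D$, so by the relation $ (e_{-1}+ae_{p-2})^{[p]}=-a(e_{-1}+ae_{p-2})$ the invariant of $w_{-1}e_{-1}+w_0e_0$ is $a=-w_0^{p-1}$, not $-(w_0/w_{-1})^{p-1}$. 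The quantity ``coefficient of $x^{p-1}$ in $u(0)/u(x)$'' is not $T$-invariant (it scales by $t^{p-1}$), so it cannot be the orbit invariant; the discrepancy by the factor $w_{-1}^{p-1}$ in your formula is the symptom. The fix is essentially the paper's own argument: the $p$-th power map. The paper handles case 1 by a genuinely different and cleaner route -- it shows $W(-a)=\{w\mid w^{[p]}=-aw\}\setminus\{0\}$ equals $G(e_{-1}+ae_{p-2})\cup\bigcup_{b\in B}G(be_0)$ and also equals the hypersurface $V(\varphi-a)$ coming from Premet's formula for the characteristic polynomial, so every component has dimension $p-1$ and the $(p-2)$-dimensional orbits $G(be_0)$ cannot be components -- thereby avoiding both the exact-constant issue and any residue computation (and for $a=0$ it simply cites Yao--Shu). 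I would recommend either adopting that argument for case 1, or, if you keep your normal-form approach, computing the constant via the $p$-th power map as above rather than via the non-invariant obstruction you wrote down.
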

Cases 2 and 4 are more or less trivial: In the former case, equation ($\ref{hej}$) shows that $G(ae_0)$ is a Zariski-closed subset of $W$ for all $a\in k$, and in the latter case we must have $Ge_i=W_{\geq i}\backslash W_{\geq i+1}$, which easily implies $\overline{Ge_i}=W_{\geq i}$. For orbits of type 1 or 3 we need to work considerably harder: Assume first that $1\leq i<\frac{p-1}{2}$. The case $i=1$ turns out to be degenerate, so we will save that for later and also assume $i\neq 1$. The following proposition provides further information about the action of $U$ on $e_i+ae_{2i}$. Note that we will sometimes represent an element $w\in W$ by its coordinates $(w_{-1},\dots,w_{p-2})$ with respect to the basis $\{ e_i\}$.
\begin{prop}\label{c}
\begin{equation*}
U(e_i+ae_{2i})=\left\{\begin{pmatrix}
0\\ \vdots\\0\\1\\a_{i+1}\\ \vdots \\a_{2i-1}\\f(a_{i+1},\ldots,a_{2i-1})+a\\a_{2i+1}\\ \vdots\\a_{p-2}
\end{pmatrix} \right\}
\end{equation*}
Here $a_{i+1},\ldots,\hat{a}_{2i},\ldots,a_{p-2}\in k$, and $f\in k[X_{i+1},\dots,X_{2i-1}]$ is a polynomial with the following properties:
\begin{enumerate}
\item If $X_{i+1}^{\alpha_{i+1}}\dots X_{2i-1}^{\alpha_{2i-1}}$ is a monomial appearing in $f$ with nonzero coefficient, then 
\begin{equation*}
\sum_{j=i+1}^{2i-1}(j-i)\alpha_j=i
\end{equation*}
\item The (usual) degree of $f$ is at most $i$, and the component in $f$ of degree i is $cX_{i+1}^i$ for some $c\in k$.
\end{enumerate}
\end{prop}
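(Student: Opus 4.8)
The plan is to read the orbit off coordinate-by-coordinate from formula (\ref{aogb}) applied separately to $e_i$ and to $e_{2i}$, and then to obtain the two properties of $f$ from a weighted-homogeneity argument. For $\varphi\in U$ write $\sigma_\varphi(e_i+ae_{2i})=\sigma_\varphi(e_i)+a\,\sigma_\varphi(e_{2i})$. Since $\sigma_\varphi(e_{2i})$ is supported in degrees $\ge 2i$ with leading coefficient $1$, the $j$-th coordinate of $\sigma_\varphi(e_i+ae_{2i})$ equals the $j$-th coordinate $a_j$ of $\sigma_\varphi(e_i)$ for $i\le j\le 2i-1$, equals $a_{2i}+a$ for $j=2i$, and equals $a_j+a\,a_j'$ (with $a_j'$ the $j$-th coordinate of $\sigma_\varphi(e_{2i})$) for $2i+1\le j\le p-2$. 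Now by (\ref{aogb}), $a_j=(2i-j)b_{j-i+1}+g_j'(b_2,\dots,b_{j-i})$. For $i+1\le j\le 2i-1$ the leading coefficient $2i-j$ runs through $\{1,\dots,i-1\}$, all nonzero in $k$ since $i\neq 1$ and $i<\frac{p-1}{2}$; hence $(b_2,\dots,b_i)\mapsto(a_{i+1},\dots,a_{2i-1})$ is a triangular polynomial automorphism of $\mathbb{A}^{i-1}$, so these coordinates may be prescribed arbitrarily, and then $b_2,\dots,b_i$ are determined. For $j=2i$ the leading term drops out and $a_{2i}=g_{2i}'(b_2,\dots,b_i)$; substituting the inverse of the automorphism just found writes this as a polynomial $f(a_{i+1},\dots,a_{2i-1})$, so the $2i$-th coordinate equals $f(a_{i+1},\dots,a_{2i-1})+a$. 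Finally, for $2i+1\le j\le p-2$ the coefficient $2i-j$ is again nonzero in $k$, so, having fixed $b_2,\dots,b_i$ and choosing $b_{i+1}$ arbitrarily (say $0$), one solves recursively for $b_{i+2},\dots,b_{p-1-i}$ to realise any prescribed values in those positions, $b_{p-i},\dots,b_{p-1}$ being irrelevant. This produces exactly the displayed description of $U(e_i+ae_{2i})$.

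For Property 1 the key input is that each coordinate $a_j$ of $\sigma_\varphi(e_i)$ is weighted-homogeneous of weight $j-i$ when one assigns $b_m$ and $c_m$ the weight $m-1$ and $x$ the weight $-1$. Indeed, with these weights $\varphi(x)=x+\sum b_mx^m$ is homogeneous of weight $-1$, so the algebra automorphism $\varphi$ is weight-preserving, hence so is $\varphi^{-1}$, which forces $\varphi^{-1}(x)=x+\sum c_mx^m$ to be homogeneous of weight $-1$, i.e.\ $\mathrm{wt}(c_m)=m-1$; then (\ref{vform}) exhibits $\sigma_\varphi(e_i)(x)=\varphi(x)^{i+1}\bigl(1+2c_2\varphi(x)+\dots+(p-1)c_{p-1}\varphi(x)^{p-2}\bigr)$ as a product of a homogeneous factor of weight $-(i+1)$ and a homogeneous factor of weight $0$, so comparing with $\sigma_\varphi(e_i)(x)=\sum_j a_jx^{j+1}$ gives $\mathrm{wt}(a_j)=j-i$. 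In particular $g_{2i}'$ is weighted-homogeneous of weight $i$ in $b_2,\dots,b_i$; and the triangular change of variables above is weight-preserving (it matches $b_{j-i+1}$, of weight $j-i$, with $a_j$, of weight $j-i$), hence its inverse is weight-preserving, so $f$ — regarded as a polynomial in the $X_j$ with $\mathrm{wt}(X_j)=j-i$ — is weighted-homogeneous of weight $i$. This is precisely Property 1. (The homogeneity of the $a_j$ can also be seen directly from $T$-equivariance of the $U$-action, conjugation by $t\in T$ rescaling the parameters by $b_m\mapsto t^{m-1}b_m$.)

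Property 2 then drops out: a monomial $\prod_{j=i+1}^{2i-1}X_j^{\alpha_j}$ occurring in $f$ satisfies $\sum_{j=i+1}^{2i-1}(j-i)\alpha_j=i$ with each $j-i\ge 1$, so its total degree $\sum_j\alpha_j$ is at most $i$, with equality forcing $\alpha_j=0$ for all $j\ge i+2$, that is, forcing the monomial $X_{i+1}^i$. I expect the main obstacle to be the weighted-homogeneity step behind Property 1 — making the grading precise (one works over $k[b_2,\dots,b_{p-1}]$ with $\deg x=-1$, $\deg b_m=m-1$) and pushing it through the recursions (\ref{bogc}) and (\ref{vform}); everything else is routine, the only subtleties being the non-vanishing in characteristic $p$ of the coefficients $2i-j$ (exactly where $i\neq 1$ and $i<\frac{p-1}{2}$ enter) and the verification that the coordinates in positions $2i+1,\dots,p-2$ remain freely prescribable after the choices forced in positions $i+1,\dots,2i-1$.
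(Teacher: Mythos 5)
Your proposal is correct and follows essentially the same route as the paper: parametrize $U(e_i+ae_{2i})$ via formula (\ref{aogb}) using the triangular dependence on $b_2,\dots,b_{p-1}$, and obtain properties 1 and 2 from the weighted grading $\deg(b_s)=s-1$ under which each coordinate $a_j$ is homogeneous of degree $j-i$. Your justification that $c_m$ has weight $m-1$ (via the graded/$T$-equivariance argument rather than the paper's explicit recursion from $\varphi(\varphi^{-1}(x))=x$) is only a cosmetic variation.
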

Note that the proposition \textit{does not} say that $c\neq0$. This actually turns out to be the case - and of crucial importance - but we are not able to prove it yet.
\begin{proof}[Proof of Proposition $\ref{c}$]
We use the notation from the discussion preceding the proof of Theorem $\ref{baner}$.
Because of the bijection $\varphi\mapsto \sigma_\varphi$ we can consider $U$ as a variety isomorphic to $\mathbb{A}^{p-2}$ with coordinate functions $
b_2,\dots b_{p-1}$. Grade the polynomial ring $k[U]$ by setting $\mathrm{deg}(b_s)=s-1$ for $2\leq s\leq p-1$. Looking closer at the equation $\varphi(\varphi^{-1}(x))=x$ we can refine formula ($\ref{bogc}$) a bit to get $c_2=-b_2$ and
\begin{equation*}
c_n=-b_n+\sum_{s=2}^{n-1}c_s\sum_{n_1+\ldots +n_s=n}b_{n_1}\ldots b_{n_s}
\end{equation*}
for $n>2$. From this it follows easily by induction that $c_2,\dots,c_{p-1}\in k[U]$, and that $c_s$ is homogeneous of degree $s-1$. Equation ($\ref{vform}$) shows that $a_j$, $i<j\leq p-2$, can be written as a sum of terms $b_{s_1}\ldots b_{s_{i+1}}c_lb_{t_1}\ldots b_{t_{l-1}}$ where $s_1+\dots+s_{i+1}+t_1+\dots+t_{l-1}=j+1$. Since such a term has degree 
\begin{equation*}
\sum_{n=1}^{i+1}(s_n-1)+(l-1)+\sum_{m=1}^{l-1}(t_m-1)=j-i
\end{equation*}
we see that $a_j\in k[U]$ is homogeneous of degree $j-i$. Furthermore, changing the indices in ($\ref{aogb}$) gives us $a_{s+i-1}=(i-s+1)b_s+h(b_2,\ldots,b_{s-1})$ for $2\leq s\leq i$ and some polynomial $h$, from which it follows by induction that
\begin{equation}\label{juno}
k[a_{i+1},\dots,a_{s+i-1}]=k[b_2,\dots,b_s]
\end{equation}
Now let $\sigma_\varphi(e_i+ae_{2i})=(0,\ldots,0,1,a_{i+1},\ldots,a_{p-2})$ (this is a slight abuse of notation, but notice that the new $a_j$ agree with the old when $i<j<2i$). Equation ($\ref{aogb}$) shows that we can recursively choose $b_2,\ldots,b_i$ to make $a_{i+1},\ldots,a_{2i-1}$ attain any set of values in $k$. But $a_{2i}=g_{2i}'(b_2,\ldots,b_i)+a$, and expressing $b_2,\ldots,b_i$ as polynomials in $a_{i+1},\ldots,a_{2i-1}$ - which is possible because of ($\ref{juno}$) - we get $a_{2i}=f(a_{i+1},\ldots,a_{2i-1})+a$ for some polynomial $f\in k[X_{i+1},\dots,X_{2i-1}]$. The coordinates $a_{2i+1},\ldots,a_{p-2}$ can again be assigned arbitrary values by choosing $b_{i+1},\ldots,b_{p-1}$ accordingly. As for properties 1 and 2, we have shown that $f(a_{i+1},\dots,a_{2i-1})$ is homogeneous of degree $2i-i=i$ when considered as an element of $k[U]$. But since each of the $a_j$ is homogeneous of degree $j-i$ we get 1, from which 2 follows directly.  
\end{proof}

Write $f=f_0+\dots+f_i$ where $f_j$ is the component in $f$ of degree $j$. In particular we have $f_i=cX_{i+1}^i$. Now we are ready to describe the $G$-orbit of $e_i+ae_{2i}$:
\begin{prop}\label{end} 
\begin{equation}\label{grim}
G(e_i+ae_{2i})=\left\{ \begin{pmatrix}
0\\ \vdots\\ 0\\ b_i\\ \vdots \\b_{2i-1}\\ \\ \sum_{j=0}^i\frac{f_j(b_{i+1},\dots,b_{2i-1})}{b_i^{j-1}}+ab_i^2\\ \\b_{2i+1}\\ \vdots\\b_{p-2}
\end{pmatrix} \right\}
\end{equation}
Here $b_i,\ldots,\hat{b}_{2i},\ldots,b_{p-2}\in k$ with $b_i\neq0$.
\end{prop}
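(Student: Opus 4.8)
The plan is to use the semidirect decomposition $G = T\ltimes U$, which yields $G(e_i+ae_{2i}) = T\bigl(U(e_i+ae_{2i})\bigr)$, and to push the description of $U(e_i+ae_{2i})$ from Proposition~\ref{c} through a general torus element. Writing the generic element of $U(e_i+ae_{2i})$ as $w=(0,\dots,0,1,a_{i+1},\dots,a_{2i-1},\,f(a_{i+1},\dots,a_{2i-1})+a,\,a_{2i+1},\dots,a_{p-2})$, with the $1$ in slot $i$, and recalling $t\cdotp e_j=t^je_j$, the vector $t\cdotp w$ has $i$-th coordinate $t^i$, $j$-th coordinate $t^ja_j$ for $j\neq i,2i$, and $2i$-th coordinate $t^{2i}\bigl(f(a_{i+1},\dots,a_{2i-1})+a\bigr)$.

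First I would introduce the variables $b_i:=t^i$ and $b_j:=t^ja_j$ for $j\neq i,2i$, and observe that this is a faithful reparametrization of the orbit: every $b_i\in k^{\times}$ arises as $t^i$ since $k$ is algebraically closed, for such a $t$ the $a_j=b_jt^{-j}$ range over all of $k$ as the $b_j$ do, and conversely $t^i\neq 0$ always; hence the admissible tuples $(b_i,\dots,\hat{b}_{2i},\dots,b_{p-2})$ are exactly those in~(\ref{grim}), and for $j\neq 2i$ the coordinates already agree. The remaining task is to rewrite the $2i$-th coordinate in these variables. Splitting off $t^{2i}a=(t^i)^2a=ab_i^2$, I would decompose $f=f_0+\dots+f_i$ into its usual homogeneous components and invoke property~1 of Proposition~\ref{c}: a monomial of $f_d$ has usual degree $\sum_l\alpha_l=d$ and weighted degree $\sum_l(l-i)\alpha_l=i$, hence $\sum_l l\alpha_l=i(d+1)$ \emph{independently of the monomial}; therefore $f_d(a_{i+1},\dots,a_{2i-1})=t^{-i(d+1)}f_d(b_{i+1},\dots,b_{2i-1})=b_i^{-(d+1)}f_d(b_{i+1},\dots,b_{2i-1})$, so $t^{2i}f_d(a_{i+1},\dots,a_{2i-1})=b_i^{1-d}f_d(b_{i+1},\dots,b_{2i-1})$. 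Summing over $d=0,\dots,i$ gives $\sum_{j=0}^i f_j(b_{i+1},\dots,b_{2i-1})/b_i^{j-1}$, which together with $ab_i^2$ is precisely the $2i$-th entry of~(\ref{grim}). This proves one inclusion, and the reverse is the same computation read backwards: given a vector of the form~(\ref{grim}), pick an $i$-th root $t$ of $b_i$, set $a_j=b_jt^{-j}$, and check using the identities above that $t^{-1}\cdotp(\text{that vector})$ lies in $U(e_i+ae_{2i})$ in the form provided by Proposition~\ref{c}.

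I expect the only delicate point to be the homogeneity bookkeeping in the $2i$-th coordinate — namely, that the power of $t$ extracted from $f_d$ does not depend on which monomial of $f_d$ one chooses, which is exactly where property~1 of Proposition~\ref{c} enters — together with keeping straight the interplay between the usual degree (used to write $f=\sum f_j$) and the grading $\deg(b_s)=s-1$ on $k[U]$ from the proof of that proposition. Everything else is a routine change of variables; as a sanity check, the free parameters $b_i,\dots,\hat{b}_{2i},\dots,b_{p-2}$ number $p-2-i$, in agreement with $\dim G(e_i+ae_{2i})$ from Theorem~\ref{baner}.
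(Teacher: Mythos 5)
Your proposal is correct and follows essentially the same route as the paper: decompose $G(e_i+ae_{2i})=T(U(e_i+ae_{2i}))$, change variables $b_i=t^i$, $b_j=t^ja_j$, and use the weighted homogeneity from property 1 of Proposition~\ref{c} to rewrite the $2i$-th coordinate, then reverse the substitution for the other inclusion. The only cosmetic difference is that you track the power of $t$ monomial-by-monomial within each usual-degree component $f_d$, whereas the paper packages the same computation into the single identity $t^if(a_{i+1},\dots,a_{2i-1})=f(ta_{i+1},\dots,t^{i-1}a_{2i-1})$.
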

\begin{proof}
We have $G(e_i+ae_{2i})=T(U(e_i+ae_{2i}))$, so let us look at the action of $t\in T$ on $w\in U(e_i+ae_{2i})$:
\begin{equation*}
t\cdotp w=\begin{pmatrix}
0\\ \vdots\\ 0\\ t^i\\t^{i+1}a_{i+1}\\ \vdots \\t^{2i}(f(a_{i+1},\dots,a_{2i-1})+a)\\ \vdots\\t^{p-2}a_{p-2}
\end{pmatrix}
\end{equation*}
for some $a_{i+1},\ldots,\hat{a}_{2i},\ldots,a_{p-2}\in k$. It follows from property 1 in Proposition $\ref{c}$ that
\begin{equation}\label{n}
t^if(a_{i+1},\dots,a_{2i-1})=f(ta_{i+1},t^2a_{i+2},\dots,t^{i-1}a_{2i-1})
\end{equation} 
Writing $b_i=t^i,b_{i+1}=t^{i+1}a_{i+1},\ldots,b_{p-2}=t^{p-2}a_{p-2}$ (skipping $b_{2i}$) and using ($\ref{n}$), the $2i$'th coordinate in $t\cdotp w$ becomes
\begin{align*}
&t^{2i}(f(a_{i+1},\dots,a_{2i-1})+a)=\\&b_if(\frac{b_{i+1}}{b_i},\dots,\frac{b_{2i-1}}{b_i})+ab_i^2=\sum_{j=0}^i\frac{f_j(b_{i+1},\dots,b_{2i-1})}{b_i^{j-1}}+ab_i^2
\end{align*} 
So we get a point like on the right hand side of ($\ref{grim}$). By simply reversing the process we see that every point of this kind is in $G(e_i+ae_{2i})$.   
\end{proof}
We will now ignore the first $i+1$ coordinates (since they are all zero anyway) and consider $G(e_i+ae_{2i})$ as a subset of $\mathbb{A}^{p-i-1}$. A general remark: for any polynomial $f$ in $n$ variables, we write $V(f)$ for the zero set of $f$ in $\mathbb{A}^n$. With this notation we have:
\begin{corollary}\label{kor}
\begin{align*}
G(e_i&+ae_{2i})=\\& \{w\in V(X_{2i}X_i^{i-1}-cX_{i+1}^i-\sum_{j=0}^{i-1}X_i^{i-j}f_j-aX_i^{i+1})\mid w_i\neq 0\}
\end{align*}
\begin{proof}
This follows directly from Proposition $\ref{end}$.
\end{proof}
\end{corollary}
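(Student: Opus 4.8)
The plan is to read off both inclusions directly from the explicit parametrisation of $G(e_i+ae_{2i})$ furnished by Proposition \ref{end}, the only real manipulation being to clear the denominators $b_i^{j-1}$. First I would take a point $w=(w_i,\dots,w_{p-2})\in G(e_i+ae_{2i})$ with $w_i\neq 0$ (recall we have discarded the first $i+1$ coordinates, which all vanish). By Proposition \ref{end} there are scalars $b_i,\dots,\hat b_{2i},\dots,b_{p-2}$ with $b_i\neq 0$ producing $w$, and comparing coordinates forces $b_j=w_j$ for every $j\neq 2i$, while the $2i$-th coordinate reads
\begin{equation*}
w_{2i}=\sum_{j=0}^i\frac{f_j(w_{i+1},\dots,w_{2i-1})}{w_i^{j-1}}+aw_i^2.
\end{equation*}
Multiplying through by $w_i^{i-1}$ — legitimate precisely because $w_i\neq 0$ — and peeling off the $j=i$ term, which equals $f_i=cX_{i+1}^i$ evaluated at $w$, this becomes
\begin{equation*}
w_{2i}w_i^{i-1}=cw_{i+1}^i+\sum_{j=0}^{i-1}w_i^{i-j}f_j(w_{i+1},\dots,w_{2i-1})+aw_i^{i+1},
\end{equation*}
which is exactly the assertion that $w\in V(X_{2i}X_i^{i-1}-cX_{i+1}^i-\sum_{j=0}^{i-1}X_i^{i-j}f_j-aX_i^{i+1})$.

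For the reverse inclusion I would run the same computation backwards: starting from $w$ in this zero set with $w_i\neq 0$, set $b_j:=w_j$ for $j=i,\dots,2i-1$ and for $j=2i+1,\dots,p-2$, and divide the defining equation by $w_i^{i-1}$; this reproduces term by term the expression for the $2i$-th coordinate in (\ref{grim}), so $w$ is of the form described there and hence lies in $G(e_i+ae_{2i})$. Here the only thing to verify is that the free parameters on the two sides genuinely match: the variables $X_{2i+1},\dots,X_{p-2}$ do not occur in the defining polynomial and so are unconstrained on both sides, $X_{i+1},\dots,X_{2i-1}$ are free on both sides, $X_i$ is subject only to $X_i\neq 0$, and given all of these the equation determines $X_{2i}$ uniquely — once more because the coefficient $X_i^{i-1}$ of $X_{2i}$ is invertible on the locus $w_i\neq 0$.

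I do not anticipate any genuine obstacle: this is pure bookkeeping once Proposition \ref{end} is in hand, which is why the corollary deserves only the one-line proof given. The single point that repays a moment's attention is that the exponent $j-1$ in the denominator $b_i^{j-1}$ ranges over $-1$ (at $j=0$, so that the $j=0$ summand actually carries a factor $b_i$ in the numerator) up to $i-1$; after multiplication by $w_i^{i-1}$ every summand therefore becomes an honest polynomial in the $w_j$ of precisely the shape appearing in the definition of the hypersurface in Corollary \ref{kor}.
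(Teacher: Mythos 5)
Your argument is correct and is exactly the computation the paper leaves implicit in its one-line proof: clearing the denominators in the parametrisation of Proposition \ref{end} by multiplying with $b_i^{i-1}$ (legitimate since $b_i\neq 0$), identifying the $j=i$ term with $cX_{i+1}^i$, and noting that on the locus $w_i\neq 0$ the equation determines $w_{2i}$ uniquely while all other coordinates are free, so the two descriptions match. Nothing is missing; this is the same approach as the paper.
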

We are almost ready to determine the orbit closure, but we need the following simple \textbf{algebraic geometric fact}: If $f\in k[X_1,\ldots,X_n]$ is a polynomial satisfying $X_j\nmid f$ for some $j\in\{1,\ldots,n\}$ and $A=\{x\in V(f)\mid x_j\neq 0\}$, then $\overline{A}=V(f)$. To prove this it is enough to show that $A$ intersects every component of $V(f)$, so assume there is a component $V(f')$ of $V(f)$ (with $f'$ an irreducible polynomial dividing $f$) such that $A\cap V(f')=\emptyset$. Then $x_j=0$ for every $x\in V(f')$, and we must have $V(f')\subseteq V(X_j)$. But this means $(X_j)\subseteq (f')$ and so $f'$ divides $X_j$, which can only happen if $f'=\alpha X_j$ for some $\alpha \neq 0$, a contradiction. Another result that will be used several times throughout the paper should also be mentioned (even though it is probably familiar to the reader): The closure of an orbit is a union of the orbit itself and certain other orbits of strictly lower dimension (\cite{H}, Proposition 8.3).     
\begin{prop}\label{fini}
\begin{equation*}
\overline{G(e_i+ae_{2i})}=V(X_{2i}X_i^{i-1}-cX_{i+1}^i-\sum_{j=0}^{i-1}X_i^{i-j}f_j-aX_i^{i+1})
\end{equation*}
and $c\neq 0$.
\end{prop}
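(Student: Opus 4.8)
Proposition \ref{fini} will be proved by extracting the orbit closure from Corollary \ref{kor}, combining the stated algebraic–geometric fact with two standard features of orbit closures: they are irreducible of the expected dimension, and they are unions of orbits. Write
\[
F=X_{2i}X_i^{i-1}-cX_{i+1}^i-\sum_{j=0}^{i-1}X_i^{i-j}f_j-aX_i^{i+1}\in k[X_i,\dots,X_{2i}],
\]
so that, inside $\mathbb A^{p-i-1}$, $G(e_i+ae_{2i})=\{w\in V(F)\mid w_i\neq 0\}$. Three observations will be used repeatedly: $F$ is linear in $X_{2i}$ with coefficient $X_i^{i-1}$; $\deg F\le i+1<p$ since $i\le\frac{p-3}{2}$; and, by property 1 of Proposition \ref{c}, $f$ has no monomial of ordinary degree $0$ or $1$ and no $f_j$ with $j<i$ contains $X_{i+1}^{j}$. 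In particular $f_0=f_1=0$, so when $c=0$ every monomial of $F$ is divisible by $X_i$; hence $X_i\nmid F$ if and only if $c\neq 0$.

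First I would determine the factorization of $F$. Since orbits are irreducible and $\dim G(e_i+ae_{2i})=p-i-2=\dim\mathbb A^{p-i-1}-1$ (Theorem \ref{baner}, statement 3'), the closure is an irreducible hypersurface $V(P)$ with $P$ irreducible. As $F$ vanishes on $V(P)$ we have $P\mid F$. Any irreducible factor $Q$ of $F$ with $V(Q)\not\subseteq V(X_i)$ satisfies $V(Q)\cap\{X_i\neq 0\}\subseteq G(e_i+ae_{2i})\subseteq V(P)$, and this set is a nonempty open, hence dense, subset of $V(Q)$, so $V(Q)=V(P)$; every remaining factor has $V(Q)=V(X_i)$. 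Therefore $F=\lambda X_i^m P^n$ with $\lambda\in k^*$, $m\ge 0$, $n\ge 1$. Differentiating in $X_{2i}$ gives $X_i^{i-1}=\lambda n X_i^m P^{n-1}\,\partial_{X_{2i}}P$; since $1\le n\le\deg F<p$ we get $n\not\equiv 0$, so if $n\ge 2$ then $P\mid X_i^{i-1}$, forcing $P=\gamma X_i$ and $V(P)=V(X_i)$, which is absurd because $e_i+ae_{2i}$ lies in the orbit with $i$-th coordinate $1$. Hence $n=1$ and $F=\lambda X_i^m P$.

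Now $m=0$ is equivalent to $c\neq 0$: if $c\neq 0$ then $X_i\nmid F$, forcing $m=0$; if $c=0$ then $X_i\mid F$, forcing $m\ge 1$. When $m=0$ we are done immediately — $F=\lambda P$ is irreducible, $X_i\nmid F$, so the quoted fact (with $f:=F$, $j:=i$) yields $\overline{G(e_i+ae_{2i})}=V(F)$. So the whole statement reduces to ruling out the case $m\ge 1$, and this — the proof that $c\neq 0$ — is the main obstacle. Assume for contradiction that $c=0$, so $F=\lambda X_i^m P$ with $m\ge 1$. Then $G(e_i+ae_{2i})=V(P)\cap\{w_i\neq 0\}$ is open dense in the irreducible variety $V(P)$, so its boundary equals $V(P)\cap V(X_i)=\{w_i=0\}\cap V(P\bmod X_i)$, where $P\bmod X_i$ is obtained from the minimal-$X_i$-degree part of $F$ by dividing out $X_i^m$. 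A short computation, using $f_0=f_1=0$, splits into two cases: if $f=0$ then $F=X_i^{i-1}(X_{2i}-aX_i^2)$ and $P\bmod X_i$ is a nonzero multiple of $X_{2i}$; if $f\neq 0$ and $j_0=\max\{j<i\mid f_j\neq 0\}$, then $m=i-j_0$ and $P\bmod X_i$ is a nonzero multiple of $f_{j_0}$. In both cases $P\bmod X_i$ is a nonzero homogeneous polynomial in $X_{i+1},\dots,X_{2i}$ of positive degree, it is not a scalar multiple of a power of $X_{i+1}$ (for $f_{j_0}$ this uses $j_0<i$ together with the third observation above), and it vanishes at $e_{i+1}$.

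The last step is to play this off against the orbit of $e_{i+1}$. That orbit has degree $i+1$, so it is contained in $\{w_i=0\}$, and on it the tuple $(w_{i+1},\dots,w_{2i})$ ranges over $k^*\times\mathbb A^{i-1}$, a Zariski-dense subset of $\mathbb A^{i}$ — this follows from Proposition \ref{end} applied with $i$ replaced by $i+1$, or, when $i+1=\frac{p-1}{2}$, from $Ge_{i+1}=W_{\ge i+1}\setminus W_{\ge i+2}$. Since $e_{i+1}$ lies in the boundary and the boundary is a union of orbits, all of $Ge_{i+1}$ lies in the boundary; hence $P\bmod X_i$ vanishes on the dense set $k^*\times\mathbb A^{i-1}$, which is impossible for a nonzero polynomial. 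This contradiction forces $m=0$, whence $c\neq 0$ and $\overline{G(e_i+ae_{2i})}=V(F)$.
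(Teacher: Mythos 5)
Your proof is correct, but it takes a genuinely different route from the paper's. The paper establishes $c\neq 0$ by descending induction on $i$: the base case $i=\frac{p-1}{2}-1$ is settled by a dimension comparison with $\overline{Ge_{\frac{p-1}{2}}}$ (a proper inclusion of irreducible closed sets of equal dimension), and the inductive step by showing that the irreducible polynomial $h$ defining $\overline{G(e_{i+1}+a'e_{2(i+1)})}$ — whose shape, in particular $c'\neq 0$, is known by induction — would have to divide $g'(0,X_{i+1},\dots,X_{2i})$, which is ruled out by comparing degrees in $X_{i+1}$. You avoid the induction entirely: unique factorization plus the $\partial_{X_{2i}}$ computation (valid since $1\le n\le\deg F\le i+1<p$) pins down $F=\lambda X_i^mP$ with $P$ the irreducible equation of the orbit closure; assuming $c=0$, the weighted homogeneity from Proposition \ref{c} identifies $P\vert_{X_i=0}$ explicitly as a nonzero scalar multiple of $X_{2i}$ or of the top nonvanishing component $f_{j_0}$ with $j_0<i$, and in either case it vanishes at $e_{i+1}$; since the boundary $\overline{G(e_i+ae_{2i})}\setminus G(e_i+ae_{2i})=V(P)\cap V(X_i)$ is $G$-stable, it must then contain all of $Ge_{i+1}$, whose projection to the coordinates $(X_{i+1},\dots,X_{2i})$ is the dense set $k^*\times\mathbb{A}^{i-1}$ (Proposition \ref{end} at index $i+1$, or $Ge_{i+1}=W_{\ge i+1}\setminus W_{\ge i+2}$ when $i+1=\frac{p-1}{2}$), forcing $P\vert_{X_i=0}=0$, a contradiction. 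Both arguments hinge on the same consequence of $c=0$ (the pure power of $X_{i+1}$ disappears, so degree-$(i+1)$ elements slip into the closure), but yours trades the paper's induction, irreducibility bookkeeping and degree comparison for a uniform-in-$i$ density argument, which is arguably more self-contained; what it needs beyond Corollary \ref{kor} is only the already-available description of the single orbit $Ge_{i+1}$. One small remark: the observation that $P\vert_{X_i=0}$ is not a scalar multiple of a power of $X_{i+1}$ is redundant — the property your argument actually uses is that it vanishes at $e_{i+1}$, which you justify by the same weight condition.
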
  
\begin{proof}
Let $g$ denote the polynomial on the right hand side, and write $g=X_i^s g'$ with $X_i\nmid g'$. It follows easily from Corollary $\ref{kor}$ and the aforementioned algebraic geometric fact that $\overline{G(e_i+ae_{2i})}=V(g')$. We prove the proposition using descending induction in $i$: Assume first that $i=\frac{p-1}{2}-1$. To reach a contradiction we also assume $c=0$. Then $g'$ cannot contain a term of the form $\beta X_{i+1}^m$ (for some $\beta\neq 0,m>0$) because of property 1 in Proposition $\ref{c}$, so we must have a point $w\in V(g')$ with $w_i=0$, $w_{i+1}\neq 0$. It follows that $\overline{G(e_{\frac{p-1}{2}})}\subsetneq \overline{G(e_i+ae_{2i})}$, but this is impossible since the dimension of the two varieties is the same (Theorem $\ref{baner}$). Therefore $c\neq 0$ and $\overline{G(e_i+ae_{2i})}=V(g')=V(g)$.

Assume now that the claim is true for $i+1$, and that $c=0$ when we calculate $G(e_i+ae_{2i})$. Exactly as before we can find a $w\in V(g')$ satisfying $w_i=0$, $w_{i+1}\neq 0$. This means that there exists $a'\in k$ such that $\overline{G(e_{i+1}+a'e_{2(i+1)})}\subseteq \overline{G(e_i+ae_{2i})}$, and by induction we know      \begin{align*}
&\overline{G(e_{i+1}+a'e_{2(i+1)})}=\\& V(X_{2(i+1)}X_{i+1}^i-c'X_{i+2}^{i+1}-\sum_{j=0}^iX_{i+1}^{i+1-j}f_j'-a'X_{i+1}^{i+2})
\end{align*}
where the $f_j'$ are certain homogeneous polynomials in $X_{i+2},\dots,X_{2i+1}$ and $c'\neq 0$. Let $h$ denote the polynomial on the right hand side. Notice that $h$ is irreducible: It is a first degree polynomial in $X_{2(i+1)}$, and the coefficients have no common factors, thanks to $c'$ being different from zero. If we set $g''=g'(0,X_{i+1},\ldots,X_{2i})$, then $\overline{G(e_{i+1}+a'e_{2(i+1)})}\subseteq \overline{G(e_i+ae_{2i})}$ means that $V(h)\subseteq V(g'')$, so $h$ divides $g''$. By looking at the degree of the two polynomials in the variable $X_{i+1}$ this is seen to be impossible, and so $c\neq 0$ and we are done.   
\end{proof}
\begin{proof}[Proof of case 3 in Theorem \ref{mopho}]
Combining Corollary $\ref{kor}$ and Proposition $\ref{fini}$ we get 
\begin{equation*}
\overline{G(e_i+ae_{2i})}=G(e_i+ae_{2i})\cup \{w\in V(g)\mid w_i=0\}
\end{equation*}
Since $g(0,X_{i+1},\ldots,X_{2i})=-cX_{i+1}^i$ we see that $\{w\in V(g)\mid w_i=0\}=W_{\geq i+2}$, and so 
\begin{equation*}
\overline{G(e_i+ae_{2i})}=G(e_i+ae_{2i})\cup W_{\geq i+2}
\end{equation*}
This is true also for the degenerate case $i=1$: Here we get $f=0$ in the setup of Proposition $\ref{c}$, and inserting this in Proposition $\ref{end}$ gives us 
\begin{equation*}
G(e_1+ae_2)=\{w\in V(X_2-aX_1^2)\mid w_1\neq 0\}
\end{equation*}
Using the remark preceding Proposition $\ref{fini}$ we conclude that $\overline{G(e_1+ae_2)}=G(e_1+ae_2)\cup W_{\geq 3}$.
\end{proof}

The last case in Theorem $\ref{mopho}$ could be proved using a similar method, but there is an easier and more elegant way: 

\begin{proof}[Proof of case 1 in Theorem $\ref{mopho}$]
The statement for $a=0$ was proved in $\cite{YS}$, so assume $a\neq 0$. For an arbitrary $\lambda \in k$ we define:
\begin{equation*}
W(\lambda)=\{w\in W\mid w^{[p]}=\lambda w\}\backslash\{0\}
\end{equation*}
The idea of the proof is to describe the set $W(-a)$ in two different ways. First we claim that
\begin{equation}\label{see}
W(-a)=G(e_{-1}+ae_{p-2}) \ \cup \ (\bigcup_{b\in B} G(be_0))
\end{equation}
To show this, note first that the sets $W(\lambda)$ are $G$-invariant: For $w\in W(\lambda)$ and $\sigma\in G$ we have:
\begin{equation*}
\sigma(w)^{[p]}=\sigma(w^{[p]})=\sigma(\lambda w)=\lambda\sigma(w)
\end{equation*}
Here the first equality comes from the fact that $W$ is centerless (it is simple), so any automorphism respects the $p$-mapping. Set $D=e_{-1}+ae_{p-2}$. By an easy induction in $i$ we get
\begin{equation*}
D^i(x)=a\frac{(p-1)!}{(p-i)!}x^{p-i}
\end{equation*}
for $2\leq i\leq p-1$. In particular $D^{p-1}(x)=a(p-1)!x=-ax$ by Wilson's Theorem, so $D^p(x)=D(-ax)=-aD(x)$. Since elements in $W$ are uniquely determined by their value on $x$, we get $D^p=-aD$, which implies $GD\subseteq W(-a)$. Now let $b\in k$ with $b^{p-1}=-a$. One easily checks that $e_0^{[p]}=e_0$, and from this it follows that $(be_0)^{[p]}=b^pe_0^{[p]}=-a(be_0)$. So $G(be_0)\subseteq W(-a)$ and we have shown:
\begin{equation}\label{no}
W(-a)\supseteq G(e_{-1}+ae_{p-2}) \ \cup \ (\bigcup_{b\in B} G(be_0))
\end{equation}
Since $W(0)=(Ge_{-1}\cup W_{\geq 1})\backslash\{0\}$ ($\cite{YS}$, Lemma 3.1) we see that every nonzero element of $W$ is contained in some $W(\lambda)$. But $W(\lambda)\cap W(\mu)=\emptyset$ whenever $\lambda\neq \mu$, and this implies equality in ($\ref{no}$).

Now consider $w\in W$ as an endomorphism of $A(1)$, and write $\mathrm{char}(w)$ for its characteristic polynomium. Corollary 1 in $\cite{P}$ gives us
\begin{equation*}
\mathrm{char}(w)=X^p+\varphi(w_{-1},\dots,w_{p-2})X=X(X^{p-1}+\varphi(w_{-1},\dots,w_{p-2}))
\end{equation*}
where $\varphi$ is a nonzero homogeneous polynomial of degree $p-1$. Our second claim is that
\begin{equation}\label{good}
W(-a)=V(\varphi-a)
\end{equation}  
Assume first that $w\in W(-a)$. Then $w$ is semisimple considered as an endomorphism of $A(1)$ ($\cite{SF}$, Proposition 3.3). In particular $w$ has a nonzero eigenvalue $\mu\in k^*$, and using the definition of $W(-a)$ one checks that $\mu^{p-1}=-a$. Inserting $\mu$ into the characteristic polynomial of $w$ gives us $w\in V(\varphi-a)$. Next, assume this is true. Then $w$ is not nilpotent, and therefore it is contained in $W(\lambda)$ for some $\lambda\neq 0$. But then any nonzero eigenvalue $\mu$ of $w$ satisfies both $\mu^{p-1}=\lambda$ and $\mu^{p-1}=-a$. So $\lambda=-a$ and we have shown ($\ref{good}$).

We are now ready to complete the proof: Since the $W(\lambda)$ are closed, it follows from ($\ref{see}$) that $\overline{G(e_{-1}+ae_{p-2})}\subseteq W(-a)$. Assume for some $b\in k$ with $b^{p-1}=-a$ that $G(be_0)$ is not in the closure of $G(e_{-1}+ae_{p-2})$. Then we see from ($\ref{see}$) that $G(be_0)$ must be an irreducible component of $W(-a)$, of dimension $p-2$ (Theorem $\ref{baner}$). But the description in ($\ref{good}$) shows that every component of $W(-a)$ has dimension $p-1$ (see $\cite{H}$, Theorem 3.3). This contradiction implies $\overline{G(e_{-1}+ae_{p-2})}= W(-a)$, and we are done.      
\end{proof}

\section{Orbits in the dual space $W^*$}
Let $W^*$ denote the dual space of the Witt algebra $W$, i.e., the vector space of all $k$-linear maps $W\rightarrow k$, and let $\{e_{-1}',\ldots,e_{p-2}'\}$ denote the basis of $W^*$ dual to the basis $\{e_{-1},\ldots,e_{p-2}\}$ of $W$. So $e_i'(e_j)=\delta_{ij}$ for $i,j\in \{-1,\ldots,p-2\}$. The elements of $W^*$ are called characters because of the representation theoretic applications mentioned in the introduction. The height $r(\chi)$ of a character $\chi\in W^*$ is defined as
\begin{equation*}
r(\chi)=\begin{cases}
\mathrm{min}\{i\in \{-1,\ldots,p-2\}\mid \chi_{\vert_ {W_{\geq i}}}=0\} & \text{if $\chi(e_{p-2})=0$} \\
p-1 & \text{if $\chi(e_{p-2})\neq0$}
\end{cases}
\end{equation*}
The notion of height has become standard in the literature on the Witt algebra and its representations, so we will use it in what follows, even though it would have been more in keeping with what we did for $W$ to use the height minus one. The automorphism group $G=\mathrm{Aut}(W)$ acts on $W^*$, the action being given by 
\begin{equation*}
g\cdotp\chi(w)=\chi(g^{-1}(w))
\end{equation*}
for $g\in G,\chi\in W^*,w\in W$. It is easily seen that $G$ preserves height, so we have a well-defined notion of height for the orbits as well. Now recall the notation introduced right after Theorem $\ref{baner}$: For $\sigma_\varphi\in U$ we write $\varphi(x)=x+b_2x^2+\dots +b_{p-1}x^{p-1}$ and $\varphi^{-1}(x)=x+c_2x^2+\dots +c_{p-1}x^{p-1}$. Using the definition of the action we get $\sigma_\varphi^{-1}(e_i')=\sum_{j=-1}^i a_je_j'$ where $a_i=1$ and $a_j$ ($-1\leq j\leq i-1$) is the coefficient of $x^{i+1}$ in $\sigma_{\varphi}(e_j)(x)$. More precisely we have (see equation ($\ref{vform}$))
\begin{equation*}
a_j=(j+1)b_{i-j+1}+(i-j+1)c_{i-j+1}+p_j(b_2,\ldots,b_{i-j},c_2,\ldots,c_{i-j})
\end{equation*}
where $-1\leq j\leq i-1$ and the $p_j$ are certain polynomials. Applying formula ($\ref{bogc}$) to express the $c_k$ in terms of the $b_k$, we get
\begin{equation}\label{ny}
a_j=(2j-i)b_{i-j+1}+p_j'(b_2,\ldots,b_{i-j})
\end{equation}
It should also be noted that the action of the torus is now given by $t\cdotp e_i'=t^{-i}e_i'$ for $t\in T$.

A complete set of representatives for the nonzero orbits in $W^*$ is given by the following theorem, which is essentially due to Feldvoss and Nakano, but with some additions and corrections:
\begin{thm}\label{snart}
For any $j\in \mathbb{N}$, let $k^{(j)}\subseteq k$ denote a set of representatives for the equivalence classes of the equivalence relation on $k$ given by: $x\sim y \Leftrightarrow x^j=y^j$. A set of representatives for the orbits of height $i$ in $W^*$ is:
\begin{enumerate}
\item $\{ae_0'\mid a\in k^*\}$ if $i=1$
\item $\{e_{i-1}'\}$ if $0\leq i\leq p-3$ and $i$ is even
\item $\{e_{i-1}'+ae_{\frac{i-1}{2}}'\mid a\in k^{(2)}\}$ if $3\leq i\leq p-2$ and $i$ is odd
\item $\{e_{p-2}'+ae_{-1}'\mid a\in k^{(p-2)}\}$ if $i=p-1$ 
\end{enumerate}
Furthermore, the dimensions of the orbits are:
\begin{enumerate}
\item[1'.] $\mathrm{dim}(G(ae_0'))=1$
\item[2'.] $\mathrm{dim}(Ge_{i-1}')=i+1$ if $0\leq i\leq p-3$ and $i$ is even
\item[3'.] $\mathrm{dim}(G(e_{i-1}'+ae_{\frac{i-1}{2}}'))=i$ if $3\leq i\leq p-2$ and $i$ is odd
\item[4'.] $\mathrm{dim}(G(e_{p-2}'+ae_{-1}'))=p-1$  
\end{enumerate}
\end{thm}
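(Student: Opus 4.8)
The plan is to run the same machine as in the proof of Theorem~\ref{baner}, with equation~(\ref{ny}) now playing the role of~(\ref{aogb}) and the torus acting by $t\cdot e_i'=t^{-i}e_i'$. The list of orbits is the substance of~\cite{FN}; what is left for us is to re-derive it in the normalization above --- which is what makes the parametrization in the odd-height cases fully explicit and supplies the corrections --- and, as an addition, to read the dimensions off the same computation.

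\emph{Reduction by $U$.} Let $\chi=\sum_{j=-1}^{i-1}\chi_je_j'$ have height $i$, so that $\chi_{i-1}\neq0$; triangularity of the $U$-action shows $\chi_{i-1}$ is $U$-invariant. For $\sigma_\varphi\in U$ and $j<i-1$, formula~(\ref{ny}) expresses $(\sigma_\varphi\cdot\chi)_j$ as $\chi_{i-1}(2j-i+1)b_{i-j}$ plus a polynomial in $b_2,\dots,b_{i-1-j}$ and in $\chi_j,\dots,\chi_{i-2}$, so one may solve for $b_{i-j}$ to annihilate it unless $2j-i+1\equiv0$. This resonance occurs only when $i$ is odd, at $j=(i-1)/2$ --- with the understanding that for $i=1$ the resonant slot coincides with the always-preserved leading slot, and that for $i=p-1$ it is forced down to $j=-1$. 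Consequently $\chi$ reduces under $U$ to $\chi_0e_0'$ if $i=1$; to $\chi_{i-1}e_{i-1}'$ if $i$ is even; to $\chi_{i-1}e_{i-1}'+a_0e_{(i-1)/2}'$ if $i$ is odd with $3\leq i\leq p-2$; and to $\chi_{p-2}e_{p-2}'+a_0e_{-1}'$ if $i=p-1$, where in the last two cases $a_0\in k$ is the value of the obstructed coordinate after the free $b$'s below it have been chosen. As in Proposition~\ref{c}, $a_0$ does not depend on those choices: an element already in reduced form has no coordinate strictly between the resonant slot and the leading one, which forces the low-index $b$'s of any $U$-element preserving its shape to vanish, and then the resonant coordinate is fixed.

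\emph{Reduction by $T$; the parameter.} Since $k$ is algebraically closed we may pick $t$ with $t^{i-1}=\chi_{i-1}$ (respectively $t^{p-2}=\chi_{p-2}$) to rescale the leading coefficient to $1$; the sole exception is $i=1$, where $T$ fixes $e_0'$, so $\chi_0\in k^*$ is an invariant of the orbit and we land in case~1. When a secondary coordinate remains, the residual $i-1$ choices of $t$ multiply it by $t^{-(i-1)/2}$, which runs over $\{\pm\beta\}$ with $\beta^2=\chi_{i-1}^{-1}$ (respectively the $p-2$ choices of $t$ differ by $(p-2)$th roots of unity); hence the orbit of $\chi$ meets $\{e_{i-1}'+ce_{(i-1)/2}'\}$ precisely in the $c$ with $c^2=\chi_{i-1}^{-1}a_0^2$, and meets $\{e_{p-2}'+ce_{-1}'\}$ precisely in the $c$ with $c^{p-2}=\chi_{p-2}a_0^{p-2}$. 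Applying this to a representative (so $\chi_{i-1}=1$ and $a_0$ is the displayed parameter) gives that $e_{i-1}'+ae_{(i-1)/2}'$ and $e_{i-1}'+a'e_{(i-1)/2}'$ lie in the same orbit if and only if $a^2=a'^2$, and similarly with $(p-2)$th powers when $i=p-1$; this is exactly the parametrization by $k^{(2)}$ in case~3 and by $k^{(p-2)}$ in case~4.

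\emph{Dimensions.} Here $\dim G(\chi)=(p-1)-\dim\mathrm{Stab}_G(\chi)$. Since $\mathrm{Stab}_G(\chi)\cap U=\mathrm{Stab}_U(\chi)$ and the image of $\mathrm{Stab}_G(\chi)$ in $T$ must fix the nonzero leading coefficient --- hence is finite, except in case~1 where it equals $T$ --- we get $\dim\mathrm{Stab}_G(\chi)=\dim\mathrm{Stab}_U(\chi)$, plus $1$ in case~1. Applied to a representative, the recursion above identifies $\mathrm{Stab}_U(\chi)$ as the subgroup cut out by the vanishing of the variables consumed in the reduction: $\{b_2=0\}$ for $ae_0'$, of dimension $p-3$ (orbit dimension $1$); $\{b_2=\dots=b_{i+1}=0\}$ for $e_{i-1}'$ with $i$ even, of dimension $p-2-i$ (orbit dimension $i+1$); $\{b_2=\dots=b_{(i-1)/2}=0\}$ together with the relations expressing $b_{(i+3)/2},\dots,b_{i+1}$ through the still-free $b_{(i+1)/2}$, for $e_{i-1}'+ae_{(i-1)/2}'$ with $i$ odd, of dimension $p-1-i$ (orbit dimension $i$); and the trivial subgroup for $e_{p-2}'+ae_{-1}'$ (orbit dimension $p-1$). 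The main obstacle throughout is the bookkeeping around the resonance: one must check that $b_{(i+1)/2}$ is never consumed (it sits alone at the resonant slot), and that the polynomial attached to the obstructed coordinate has no constant term, so that it vanishes once $b_2=\dots=b_{(i-1)/2}=0$ --- this is precisely what makes the $k^{(2)}$-parametrization and the count $\dim\mathrm{Stab}_U=p-1-i$ in case~3 come out cleanly, in direct analogy with the role played by~(\ref{juno}) and Proposition~\ref{c} in the treatment of $W$.
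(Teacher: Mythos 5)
Your proposal is correct and follows essentially the same route as the paper: the triangular $U$-action via formula~(\ref{ny}) with the resonance at $2j-i+1\equiv 0$, then the residual $T$-action to extract the $a^2$ (resp.\ $a^{p-2}$) invariant, and dimensions via stabilizers. The only divergence is your description of the $U$-stabilizer in case~3', where you keep $b_{(i+1)/2}$ free and let $b_{(i+3)/2},\dots,b_{i+1}$ be determined by it, rather than the paper's subgroup with $b_2=\dots=b_i=0$; both descriptions yield $\dim\mathrm{Stab}_U=p-1-i$ and hence the same orbit dimension $i$.
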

\begin{proof}
Case 1 is analogous to 2 in Theorem $\ref{baner}$ and proved in exactly the same way, while case 2 was proved in $\cite{FN}$. Here it was also shown that if $3\leq i\leq p-2$ and $i$ is odd, then every character of height $i$ is in the orbit of some $e_{i-1}'+ae_{\frac{i-1}{2}}'$, but Feldvoss and Nakano actually gave incorrect results in cases 1 and 3, having seemingly overlooked the action of $T$ (this was already remarked in $\cite{R}$). Now assume $e_{i-1}'+ae_{\frac{i-1}{2}}'$ and $e_{i-1}'+a'e_{\frac{i-1}{2}}'$ are in the same orbit, for some $a,a'\in k$. So there exists $g=t\circ \sigma_{\varphi}\in G$ (with $t\in T$, $\sigma_{\varphi}\in U$) such that $g(e_{i-1}'+ae_{\frac{i-1}{2}}')=e_{i-1}'+a'e_{\frac{i-1}{2}}'$, or equivalently:
\begin{equation*}
\sigma_{\varphi}(e_{i-1}'+ae_{\frac{i-1}{2}}')=t^{i-1}e_{i-1}'+t^{\frac{i-1}{2}}a'e_{\frac{i-1}{2}}'
\end{equation*}
If the coefficients of $e_{i-2}',\dots,e_{\frac{i-1}{2}+1}'$ in $\sigma_{\varphi}(e_{i-1}'+ae_{\frac{i-1}{2}}')$ are all zero, then the coefficient of $e_{\frac{i-1}{2}}'$ must be $a$ (look at equation ($\ref{ny}$)). So we get $t^{i-1}=1$ and $a=t^{\frac{i-1}{2}}a'$. Raising this last equation to the second power yields $a^2=(a')^2$. If on the other hand this is true, i.e., $a'=\pm a$, then it is easily seen that $e_{i-1}'+ae_{\frac{i-1}{2}}'$ and $e_{i-1}'+a'e_{\frac{i-1}{2}}'$ are in the same orbit (in the case $a'=-a$ just find $t\in T$ satisfying $t^{-\frac{i-1}{2}}=-1$ and let it act on $e_{i-1}'+ae_{\frac{i-1}{2}}'$). This proves case 3, and case 4 is proved in the same way.  As for the dimension statements, the first two cases are straightforward, and one simply adapts the proof of Theorem 4.1 in $\cite{YS}$ for the last two. We will give an outline of the steps needed in case 3': Write $\chi=e_{i-1}'+ae_{\frac{i-1}{2}}'$, and let $G_\chi$ denote the stabilizer of $\chi$ in $G$. Then we have
\begin{equation*}
G_\chi=(G_\chi\cap T)\ltimes(G_\chi\cap U)
\end{equation*}
To prove this, it is enough to show that if $t^{-1}\in T$, $u\in U$ and $t^{-1}\circ u\in G_\chi$, then $t^{-1},u\in G_\chi$. Let $u(\chi)=e_{i-1}'+a_{i-2}e_{i-2}'+\dots +a_{-1}e_{-1}'$. We get $t^{-1}\circ u(\chi)=t^{i-1}e_{i-1}'+t^{i-2}a_{i-2}e_{i-2}'+\dots +t^{-1}a_{-1}e_{-1}'$, and since we assume $t^{-1}\circ u\in G_\chi$ we must have $a_{i-2},\dots,\hat{a}_{\frac{i-1}{2}},\dots,a_{-1}=0$. It now follows from equation ($\ref{synth}$) on the next page that $a_{\frac{i-1}{2}}=a$. So $u\in G_\chi$, which easily implies that $t^{-1}\in G_\chi$ too.
    
Setting $G_1=G_\chi\cap T$ and $G_2=G_\chi\cap U$, it can be shown that
\begin{equation*}
G_1=\{t\in T\mid t^\frac{i-1}{2}=1\}
\end{equation*}
\begin{equation*}
G_2=\{\sigma_\varphi\in U\mid \varphi(x)=x+b_{i+1}x^{i+1}+\dots+b_{p-1}x^{p-1}\}
\end{equation*}
But then $\mathrm{dim}(G_\chi)=\mathrm{dim}(G_1)+\mathrm{dim}(G_2)=p-i-1$, which implies
\begin{equation*}
\mathrm{dim}(G\chi)=\mathrm{dim}(G)-\mathrm{dim}(G_\chi)=i
\end{equation*}
\end{proof}

Now we determine the closures in $W^*$ of orbits of all heights except $p-1$: 

\begin{thm}\label{fu}
Let $a\in k$. We have:
\begin{enumerate}
\item $\overline{G(ae_0')}=G(ae_0')$
\item $\overline{Ge_{i-1}'}=\{\chi\in W^*\mid r(\chi)\leq i\}$ if $0\leq i\leq p-3$ and $i$ is even
\item $\overline{G(e_{i-1}'+ae_{\frac{i-1}{2}}')}=G(e_{i-1}'+ae_{\frac{i-1}{2}}')\cup \{\chi\in W^*\mid r(\chi)\leq i-2\}$ if $3\leq i \leq p-2$ and i is odd
\end{enumerate} 
\end{thm}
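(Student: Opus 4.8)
The plan is to carry the entire argument of Theorem \ref{mopho} over to the dual side, with equation (\ref{ny}) replacing (\ref{aogb}) and the torus weights $t\cdot e_j'=t^{-j}e_j'$ replacing $t\cdot e_i=t^ie_i$. Cases 1 and 2 are the cheap ones. For case 1 one copies the proof of case 2 of Theorem \ref{mopho}: since $t\cdot e_0'=e_0'$ for all $t\in T$, formula (\ref{ny}) with top index $0$ shows $U(ae_0')=\{ae_0'+be_{-1}'\mid b\in k\}$, and the torus adds nothing, so $G(ae_0')=\{ae_0'+be_{-1}'\mid b\in k\}\cong\mathbb{A}^1$ is already Zariski-closed. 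For case 2, Theorem \ref{snart} gives a single orbit of height $i$ when $i$ is even, hence $Ge_{i-1}'$ equals the whole layer $\{\chi\mid r(\chi)=i\}$; this layer is the complement of a hyperplane inside the closed coordinate subspace $\{\chi\mid r(\chi)\le i\}\cong\mathbb{A}^{i+1}$, hence dense, so $\overline{Ge_{i-1}'}=\{\chi\mid r(\chi)\le i\}$, just as $\overline{Ge_i}=W_{\ge i}$ was obtained in Theorem \ref{mopho}.

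Case 3 carries the real content. Write $m=\tfrac{i-1}{2}$ and $\chi=e_{i-1}'+ae_m'$; the smallest value $i=3$ is degenerate (the polynomial $f$ below vanishes there, just as it did when $i=1$ in Theorem \ref{mopho}) and will be disposed of at the end, so assume $i\ge 5$. I would reproduce, in order, the dual analogues of Proposition \ref{c}, Proposition \ref{end}, Corollary \ref{kor} and Proposition \ref{fini}. Grading $k[U]$ by $\deg b_s=s-1$ and combining (\ref{ny}) with the refinement of (\ref{bogc}), the $e_j'$-coordinate of $\sigma_\varphi^{-1}(e_{i-1}')$ is homogeneous of degree $i-1-j$; one may then choose $b_2,\dots,b_m$ recursively so that the coordinates in positions $m+1,\dots,i-2$ take any prescribed values, after which the $e_m'$-coordinate of $\sigma_\varphi^{-1}(\chi)$ equals $f(a_{m+1},\dots,a_{i-2})+a$ for a polynomial $f\in k[X_{m+1},\dots,X_{i-2}]$ every monomial $\prod_j X_j^{\alpha_j}$ of which obeys $\sum_j(i-1-j)\alpha_j=m$, while the remaining low coordinates stay free; in particular $\deg f\le m$ and its degree-$m$ component is $cX_{i-2}^m$. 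Acting next by $t\in T$: the top coordinate scales by $t^{-(i-1)}$ while the summand $a$ at position $m$ scales by $t^{-m}=t^{-(i-1)/2}$, so on setting $b_{i-1}=t^{-(i-1)}$ one is forced into $t^{-m}=\pm\sqrt{b_{i-1}}$, both signs occurring, and the $e_m'$-coordinate of a point of $G\chi$ becomes $\sum_{d=0}^m b_{i-1}^{1-d}f_d(b_{m+1},\dots,b_{i-2})\pm a\sqrt{b_{i-1}}$. Clearing the square root and the negative powers of $b_{i-1}$ gives
\begin{equation*}
G\chi=\{\,w\in V(g)\mid w_{i-1}\ne 0\,\},\qquad g=\Bigl(X_{i-1}^{m-1}X_m-\sum_{d=0}^m X_{i-1}^{m-d}f_d\Bigr)^{2}-a^2X_{i-1}^{i-2}
\end{equation*}
(so $g$ has degree $2$ in $X_m$, not degree $1$ as in Corollary \ref{kor}; this, and the squares appearing in the representatives of Theorem \ref{snart}, reflect the same fact). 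Finally, writing $g=X_{i-1}^{s}g'$ with $X_{i-1}\nmid g'$, the algebraic-geometric fact quoted before Proposition \ref{fini} yields $\overline{G\chi}=V(g')$.

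Everything now reduces to the claim $c\ne 0$ (equivalently $s=0$ and $g'=g$), which is the dual of Proposition \ref{fini} and the one genuinely hard point. Granting it, $g|_{X_{i-1}=0}=c^2X_{i-2}^{i-1}$, so $\{w\in V(g)\mid w_{i-1}=0\}=\{X_{i-1}=X_{i-2}=0\}=\{\chi\mid r(\chi)\le i-2\}$, and as $\overline{G\chi}=V(g)$ is the disjoint union of $G\chi$ with this set, case 3 follows (and the degenerate $i=3$ goes through directly with $f=0$, as $i=1$ did for Theorem \ref{mopho}). I would prove $c\ne 0$ by contradiction: if $c=0$, the weighted-degree condition on $f$ forbids $g'|_{X_{i-1}=0}$ from containing any pure power of $X_{i-2}$, so setting the intermediate coordinates to zero produces a point $w\in V(g')=\overline{G\chi}$ with $w_{i-1}=0$, $w_{i-2}\ne 0$, that is, a point of height $i-1$; since there is a unique orbit of height $i-1$, this forces $\overline{Ge_{i-2}'}=\{\chi\mid r(\chi)\le i-1\}\subseteq\overline{G\chi}$ (by case 2). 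But $\dim\overline{G\chi}=\dim G\chi=i$ by part 3' of Theorem \ref{snart}, and $\dim\{\chi\mid r(\chi)\le i-1\}=i$ by part 2', so the irreducible variety $\overline{G\chi}$ of dimension $i$ would have to equal the linear subspace $\{\chi\mid r(\chi)\le i-1\}$, which is absurd since $\chi(e_{i-1})\ne 0$. (The same step can instead be organised as a descending induction on $i$ copying the degree count in the proof of Proposition \ref{fini}, but here neighbouring heights already yield orbits of equal dimension, so the plain dimension count closes it.)
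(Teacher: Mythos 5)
Your proposal is correct and follows essentially the same route as the paper: dual analogues of Proposition \ref{c}, Proposition \ref{end} and Corollary \ref{kor} built from equation (\ref{ny}) and the grading of $k[U]$, leading to the description of $G(e_{i-1}'+ae_{\frac{i-1}{2}}')$ as $\{w\in V(g)\mid w_{i-1}\neq 0\}$ with $g=(X_{\frac{i-1}{2}}X_{i-1}^{\frac{i-1}{2}-1}-\sum_j X_{i-1}^{\frac{i-1}{2}-j}f_j)^2-a^2X_{i-1}^{i-2}$, and then the key claim $c\neq 0$ settled by the same dimension comparison with the unique (even-height) orbit $Ge_{i-2}'$, which has the same dimension $i$ as $\overline{G(e_{i-1}'+ae_{\frac{i-1}{2}}')}$. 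Your explicit treatment of the degenerate case $i=3$ (where $f=0$) and of cases 1 and 2 only fills in details the paper declares trivial or omits.
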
  
\begin{proof} 
Cases 1 and 2 are trivial, so let us concentrate on case 3: Since the procedure is similar to the one we used to prove case 3 in Theorem $\ref{mopho}$ we will omit quite a few details. Write $s=\frac{i-1}{2}$ and $\sigma_\varphi^{-1}(e_{i-1}')=\sum_{j=-1}^{i-1}a_je_j'$. We grade the polynomial ring $k[U]=k[b_2,\dots,b_{p-1}]$ by setting $\mathrm{deg}(b_s)=s-1$. Then it is easily shown that $a_{-1},\dots,a_{i-2}\in k[U]$, and that $a_j$ is homogeneous of degree $i-j-1$. Proceeding exactly as in the proof of Proposition $\ref{c}$, we get: 
\begin{equation}\label{synth}
U(e_{i-1}'+ae_s')=\left\{\begin{pmatrix}
a_{-1}\\ \vdots\\a_{s-1}\\f(a_{s+1},\ldots,a_{i-2})+a\\a_{s+1}\\ \vdots\\a_{i-2} \\1\\0\ \vdots \\0
\end{pmatrix} \right\}
\end{equation}
where $a_{-1},\dots,\hat{a}_{2s},\dots,a_{i-2}\in k$ and $f\in k[X_{s+1},\dots,X_{i-2}]$ is a polynomial satisfying the following properties:
\begin{enumerate}
\item If $X_{s+1}^{\alpha_{s+1}}\dots X_{i-2}^{\alpha_{i-2}}$ is a monomial appearing in $f$ with nonzero coefficient, then 
\begin{equation*}
\sum_{j=s+1}^{i-2} (i-j-1)\alpha_j=s
\end{equation*}
\item The (usual) degree of $f$ is at most $s$, and the component in $f$ of degree $s$ is $cX_{i-2}^s$ for some $c\in k$. 
\end{enumerate}
One can now write $f=f_0+\dots+f_s$ ($f_j$ being the component in $f$ of degree $j$) and repeat the proof of Proposition $\ref{end}$ to get
\begin{equation*}
G(e_{i-1}'+ae_s')=\left\{ \begin{pmatrix}
b_{-1}\\ \vdots\\ b_{s-1} \\ \\ \sum_{j=0}^s\frac{f_j(b_{s+1},\dots,b_{i-2})}{b_{i-1}^{j-1}}\pm a\sqrt{b_{i-1}}\\ \\b_{s+1}\\ \vdots\\b_{i-1}\\0\\ \vdots \\0
\end{pmatrix} \right\}
\end{equation*} 
where $b_{-1},\dots,\hat{b}_s,\dots,b_{i-1}\in k$ and $b_{i-1}\neq 0$. It follows directly that
\begin{align*}
G(&e_{i-1}'+ae_s')=\\& \{x\in V((X_sX_{i-1}^{s-1}-cX_{i-2}^s-\sum_{j=0}^{s-1}X_{i-1}^{s-j}f_j)^2-a^2X_{i-1}^{2s-1})\mid x_{i-1}\neq 0\}
\end{align*}
If $c=0$ then $\overline{G(e_{i-2}')}\subsetneq \overline{G(e_{i-1}'+ae_s')}$, just as in the proof of Proposition $\ref{fini}$, but this is a contradiction since the dimension of the two varieties is the same (Theorem $\ref{snart}$). So $c\neq 0$ and the theorem follows easily.
\end{proof}
  
It remains only to determine the orbit closures of characters of height $p-1$. This case turns out to be the hardest, and the strongest statement we can prove is:
\begin{prop}\label{bo}
Either $\overline{Ge_{p-2}'}=Ge_{p-2}'\cup \{\chi\in W^*\mid r(\chi)\leq p-3\}$ or $\overline{Ge_{p-2}'}=Ge_{p-2}'\cup Ge_{p-3}'\cup \{\chi\in W^*\mid r(\chi)\leq p-3\}$. If the former is true, then 
\begin{equation}\label{fi}
\overline{G(e_{p-2}'+ae_{-1}')}=G(e_{p-2}'+ae_{-1}')\cup \{\chi\in W^*\mid r(\chi)\leq p-3\}
\end{equation}
for all $a\in k$.
\end{prop}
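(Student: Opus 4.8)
The plan is to re-run the machinery of Propositions \ref{c} and \ref{end} and of the proof of case~3 of Theorem \ref{fu} one last time, now for the ``top'' character $e_{p-2}'+ae_{-1}'$, and then to read the closure off the resulting equation. Since $\sigma_\varphi^{-1}$ fixes $e_{-1}'$, formula (\ref{ny}) lets us pick $b_2,\dots,b_{p-1}$ recursively so that the coordinates $a_0,\dots,a_{p-3}$ of $\sigma_\varphi^{-1}(e_{p-2}')$ take any prescribed values (the relevant leading coefficients $2j-(p-2)$ are all nonzero for $-1<j<p-2$), after which the $e_{-1}'$-coordinate is forced to equal $\tilde g(a_0,\dots,a_{p-3})+a$ for a polynomial $\tilde g$. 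Grading $k[U]=k[b_2,\dots,b_{p-1}]$ by $\deg b_s=s-1$ as before, every monomial $\prod_m X_m^{\alpha_m}$ of $\tilde g$ satisfies $\sum_m(p-2-m)\alpha_m=p-1$; in particular $\tilde g$ has ordinary degree at most $p-1$ and its degree-$(p-1)$ component is $cX_{p-3}^{p-1}$ for some $c\in k$. Bringing in the torus (via $t\cdot e_j'=t^{-j}e_j'$ and the weight-homogeneity of $\tilde g$) and clearing the fractional power $y_{p-2}^{-1/(p-2)}$ that becomes attached to $a$ --- exactly as the square root was cleared in Theorem \ref{fu} --- one obtains, with $\tilde g_d$ the degree-$d$ component of $\tilde g$,
\begin{align*}
G(e_{p-2}'+ae_{-1}')&=\bigl\{\,y\mid y_{p-2}\neq0,\ P_a(y)=0\,\bigr\},\\
P_a&:=\Bigl(Y_{-1}Y_{p-2}^{p-2}-\sum_{d=2}^{p-1}\tilde g_d\,Y_{p-2}^{p-1-d}\Bigr)^{p-2}-a^{p-2}Y_{p-2}^{(p-2)^2-1}.
\end{align*}
As in the proof of Proposition \ref{fini} one would like to show $c\neq0$; but there is no orbit of degree $p$ to induct down from, so we split into cases.

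If $c\neq0$: the term $(-c)^{p-2}Y_{p-3}^{(p-1)(p-2)}$ occurs in $P_a$ with no factor of $Y_{p-2}$, so $Y_{p-2}\nmid P_a$, and the elementary algebraic-geometric fact used earlier gives $\overline{G(e_{p-2}'+ae_{-1}')}=V(P_a)$. Setting $Y_{p-2}=0$ kills the monomial $a^{p-2}Y_{p-2}^{(p-2)^2-1}$ and every monomial of $\tilde g_d$ with $d<p-1$ --- each such monomial involves some $X_m$ with $m<p-3$, hence in $P_a$ carries a positive power of $Y_{p-2}$ --- leaving $\bigl(-cX_{p-3}^{p-1}\bigr)^{p-2}$. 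Hence $V(P_a)\cap\{Y_{p-2}=0\}=\{\chi\mid r(\chi)\le p-3\}$, and so $\overline{G(e_{p-2}'+ae_{-1}')}=G(e_{p-2}'+ae_{-1}')\cup\{\chi\mid r(\chi)\le p-3\}$ for every $a\in k$. The case $a=0$ is precisely the first alternative of Proposition \ref{bo}, and the statement for general $a$ is its final assertion. In fact the first alternative is equivalent to $c\neq0$: if $c=0$ then $Y_{p-2}$ divides the reduced defining polynomial of $\overline{Ge_{p-2}'}$ one extra time and the boundary grows, as we now see.

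If $c=0$: let $d_{\max}$ denote the top degree of $\tilde g$. Factoring $Y_{p-2}^{p-1-d_{\max}}$ out of $Q_0:=Y_{-1}Y_{p-2}^{p-2}-\sum_d\tilde g_d\,Y_{p-2}^{p-1-d}$ and using the algebraic-geometric fact gives $\overline{Ge_{p-2}'}=V(Q_0')$ with $Q_0'|_{Y_{p-2}=0}=-\tilde g_{d_{\max}}$, so $\overline{Ge_{p-2}'}\cap\{Y_{p-2}=0\}=V(Y_{p-2},\tilde g_{d_{\max}})$. This set is $G$-stable and of dimension $p-2$, hence a union of orbit closures of dimension $p-2$; by Theorem \ref{snart} the only candidates are $\overline{Ge_{p-4}'}=\{\chi\mid r(\chi)\le p-3\}$ and the height-$(p-2)$ orbit closures $\overline{G(e_{p-3}'+ae_{\frac{p-3}{2}}')}$, whose defining polynomials have degree $\tfrac{p-3}{2}$ for $a=0$ and degree $p-3$ for $a\neq0$. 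Since $\{\,Y_{p-2}=0,\ Y_{p-4}=0\,\}$ is not $G$-stable while $V(Y_{p-2},\tilde g_{d_{\max}})$ is, the weight-homogeneous shape of the degree-$(p-2)$ part of $\tilde g$ (forced to be of the form $c''X_{p-4}X_{p-3}^{p-3}$) shows that $c''=0$ whenever $c=0$; a degree count based on $d_{\max}<p-1$ then rules out the $a\neq0$ orbit closures, and the fact that $\tilde g_{d_{\max}}$ cannot be a pure power of $X_{p-3}$ forces the degree-$\tfrac{p-3}{2}$ factor to occur. Combining this with the preliminary remark that $\{\chi\mid r(\chi)\le p-3\}\subseteq\overline{Ge_{p-2}'}$ always --- for which one checks by a direct limiting computation that every character of height $\le p-3$ is a limit of elements of $Ge_{p-2}'$ --- yields $V(Y_{p-2},\tilde g_{d_{\max}})=\{\chi\mid r(\chi)\le p-3\}\cup Ge_{p-3}'$, hence $\overline{Ge_{p-2}'}=Ge_{p-2}'\cup Ge_{p-3}'\cup\{\chi\mid r(\chi)\le p-3\}$, the second alternative.

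The hard part --- the only obstacle to an unconditional statement --- is deciding whether $c=0$ can actually occur. Writing $c$ out explicitly appears infeasible, and the descending induction on $i$ that settled the analogous question in Proposition \ref{fini} is unavailable here, since there is no orbit of degree $p$ to start from. This is exactly why Proposition \ref{bo} is phrased conditionally.
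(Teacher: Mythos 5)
Your setup and your treatment of the case $c\neq 0$ are essentially the paper's: $Y_{p-2}\nmid P_a$, so the closure is $V(P_a)$, and restricting to $\{Y_{p-2}=0\}$ leaves $(-c)^{p-2}Y_{p-3}^{(p-1)(p-2)}$, giving the boundary $\{\chi\mid r(\chi)\le p-3\}$; this yields the first alternative for $a=0$ and (\ref{fi}) for all $a$. That half is fine.

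The case $c=0$ is where the real content lies, and there your argument has genuine gaps. First, the ``preliminary remark'' that $\{\chi\mid r(\chi)\le p-3\}\subseteq\overline{Ge_{p-2}'}$ always, justified only by an unspecified ``direct limiting computation'', is precisely the hard point and cannot simply be asserted: without it, even granting your other steps, the boundary could a priori be just $\overline{Ge_{p-3}'}=Ge_{p-3}'\cup\{\chi\mid r(\chi)\le p-4\}$, which is neither of the two alternatives in the statement. The paper obtains this containment only at the very end, by showing that the irreducible polynomial $g$ defining $\overline{Ge_{p-3}'}$ divides the top weight-homogeneous component $\tilde g_{d_{\max}}$ with a cofactor of weight $2$, hence equal to $dX_{p-3}^2$ or $d'X_{p-4}$, and then ruling out $d'X_{p-4}$ by the argument with the set $B=\{x\mid x_{p-2}=0,\ x_{p-3}=b,\ x_{p-4}=0\}$; the factor $X_{p-3}^2$ is exactly what puts $\{r(\chi)\le p-3\}$ into the boundary, and no easier limiting argument is offered anywhere in the paper. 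Second, your exclusion of the $a\neq0$ height-$(p-2)$ orbit closures by ``a degree count based on $d_{\max}<p-1$'' is inconclusive: after $c=c''=0$ you only know $d_{\max}\le p-3$, while those closures are cut out by polynomials of degree exactly $p-3$, so the case $d_{\max}=p-3$ survives the count. You need an extra input here, for instance the paper's observation that $\overline{Ge_{p-2}'}$ is stable under nonzero scalars, so containing one orbit $G(e_{p-3}'+a'e_{\frac{p-3}{2}}')$ with $a'\neq0$ forces it to contain all of them and hence the irreducible $(p-1)$-dimensional set $\{\chi\mid r(\chi)\le p-2\}$, which is impossible; equivalently one can note that $\tilde g_{d_{\max}}$ is weight-homogeneous while those defining polynomials are not. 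Two smaller points: your derivation of $c''=0$ rests on $\{Y_{p-2}=Y_{p-4}=0\}$ not being $G$-stable, but by (\ref{ny}) the coefficient of $e_{p-4}'$ in $\sigma_\varphi^{-1}(e_{p-3}')$ is $(p-5)b_2$, so this claim fails for $p=5$; and the assertion that every component of the boundary is an orbit closure of dimension $p-2$ needs the (easy, but missing) remark that the characters of height at most $p-4$ form a set of dimension only $p-3$. As written, the $c=0$ half does not establish the second alternative, so the proposition is not proved.
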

\begin{proof}
Applying the usual method, we get
\begin{align*}
&G(e_{p-2}'+ae_{-1}')=\\& \{x\in V((X_{-1}X_{p-2}^{p-2}-cX_{p-3}^{p-1}-q)^{p-2}-a^{p-2}X_{p-2}^{(p-2)(p-2)-1})\mid x_{p-2} \neq 0\}
\end{align*}
where $q=\sum_{j=1}^{p-2}X_{p-2}^{p-1-j}f_j$ and $f_j$ is a homogeneous polynomial of degree $j$. Furthermore, if $X_0^{\alpha_0}\dots X_{p-3}^{\alpha_{p-3}}$ is a monomial appearing in any of the $f_j$ with non-zero coefficient, then
\begin{equation}\label{gu}
\sum_{k=0}^{p-3}(p-2-k)\alpha_k=p-1
\end{equation}  
We would like to prove $c\neq 0$, but this time we cannot use a dimension argument (notice that the dimension of an orbit of height $p-2$ is \textit{one less} than the dimension of $\overline{G(e_{p-2}'+ae_{-1}')}$), nor look at the degree of the polynomials defining the orbits (as in Proposition $\ref{fini}$), to derive a contradiction. It is, however, possible to say something meaningful when $a=0$. In this case we have:
\begin{equation*}
Ge_{p-2}'=\{x\in V(X_{-1}X_{p-2}^{p-2}-cX_{p-3}^{p-1}-\sum_{j=1}^{p-2}X_{p-2}^{p-1-j}f_j)\mid x_{p-2} \neq 0\}
\end{equation*}
If $c\neq 0$, then the orbit closure is equal to the zero set of the polynomial on the right hand side, which is exactly $Ge_{p-2}'\cup \{\chi\in W^*\mid r(\chi)\leq p-3\}$, and the same argument for arbitrary $a$ gives us ($\ref{fi}$). Now assume $c=0$ and let l be the maximal number satisfying $f_l\neq 0$. Then we get:
\begin{equation*}
\overline{Ge_{p-2}'}=V(X_{-1}X_{p-2}^{l-1}-
\sum_{j=1}^lX_{p-2}^{l-j}f_j)
\end{equation*}   
The polynomial on the right hand side will be denoted by $f'$. It follows, as in the proof of Proposition $\ref{fini}$, that an orbit $G(e_{p-3}'+a'e_{\frac{p-3}{2}})$ is contained in $\overline{Ge_{p-2}'}$. Assume $a'\neq 0$. Note that $Ge_{p-2}$ is invariant under multiplication by scalars different from zero, and the same is then true for the closure. Since $b(G(e_{p-3}'+a'e_{\frac{p-3}{2}}))=G(e_{p-3}'+\sqrt{b}a'e_{\frac{p-3}{2}})$ for any $b\neq 0$ we infer that
\begin{equation*}
\bigcup_{b\neq 0}G(e_{p-3}'+be_{\frac{p-3}{2}})\subseteq \overline{Ge_{p-2}'} 
\end{equation*}
But the set on the left hand side is open in $\{\chi\in W^*\mid r(\chi)\leq p-2\}$, and since this set is irreducible we have
\begin{equation*}
\{\chi\in W^*\mid r(\chi)\leq p-2\}\subsetneq \overline{Ge_{p-2}'}
\end{equation*}
But the dimension of both these varieties is $p-1$ which is a contradiction. This means that $a'$ cannot be zero, and so $Ge_{p-3}'\subseteq \overline{Ge_{p-2}'}$. From the proof of Theorem $\ref{fu}$ we get 
\begin{equation*}
\overline{Ge_{p-3}'}=V(X_s X_{p-3}^{s-1}-c'X_{p-4}^s-\sum_{j=1}^{s-1}X_{p-3}^{s-j}g_j)
\end{equation*} 
where $s=\frac{p-3}{2}$, $c'\neq 0$ and the polynomial on the right hand side - let us call it $g$ - is irreducible. Also every monomial $X_0^{\alpha_0}\dots X_{p-3}^{\alpha_{p-3}}$ appearing in $g$ with non-zero coefficient satisfies $\sum_{k=0}^{p-3}(p-3-k)\alpha_k=s$. The inclusion $\overline{Ge_{p-3}'}\subseteq \overline{Ge_{p-2}'}$ implies that $g$ divides $f'(X_0,\dots ,X_{p-3},0)=f_l$, so we can write $f_l=gh$ for some polynomial $h$. Now grade the polynomial ring in the $X_j$ by letting the degree of $X_j$ be $p-2-j$. Then $f_l$ is homogeneous of degree $p-1$, and for a monomial $X_0^{\alpha_0}\dots X_{p-3}^{\alpha_{p-3}}$ appearing in $g$ we have
\begin{equation*}
\sum_{j=0}^{p-3}(p-2-j)\alpha_j=\sum_{j=0}^{p-3}(p-3-j)\alpha_j+\sum_{j=0}^{p-3}\alpha_j=2s=p-3
\end{equation*}
so $g$ is homogeneous of degree $p-3$ (note that we use the fact that $g$ is homogeneous of degree $s$ in the usual sense for the second equality). It follows that $h$ is homogeneous of degree $2$, which leaves only two possibilities: either $h=dX_{p-3}^2$ or $h=d'X_{p-4}$. In the second case we get $f_l=d'X_{p-4}g$ and
\begin{align*}
&\overline{Ge_{p-2}'}=\\& Ge_{p-2}'\cup \{x\in W^*\mid x_{p-2}=0,f_q(x_{-1},\dots,x_{p-3})=0\}=\\& Ge_{p-2}'\cup\overline{Ge_{p-3}'}\cup \{x\in W^*\mid x_{p-2}=x_{p-4}=0\} 
\end{align*}
But this is impossible: Consider the set $B=\{x\in W^*\mid x_{p-2}=0,x_{p-3}=b,x_{p-4}=0\}$ for some $b\neq 0$. This set would then be contained in $\overline{Ge_{p-2}'}$, and since characters of height $p-2$ in $\overline{Ge_{p-2}'}$ must be contained in $\overline{Ge_{p-3}'}$ we get $B\subseteq \overline{Ge_{p-3}'}$. But this means that $g$ becomes zero when inserting $b$ for $X_{p-3}$ and 0 for $X_{p-4}$, and this is clearly untrue. So we must have $f_l=dX_{p-3}^2g$, and a calculation similar to the previous one gives us:
\begin{equation*}
\overline{Ge_{p-2}'}=Ge_{p-2}'\cup Ge_{p-3}'\cup\{\chi\in W^*\mid r(\chi)\leq p-3\}
\end{equation*}  
\end{proof}
We can deduce the following interesting corollary from our results:
\begin{corollary}
The algebra of invariants $k[W^*]^G$ is trivial, i.e., $k[W^*]^G=k$.
\end{corollary}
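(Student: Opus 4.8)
The plan is to exploit the fact that $G$ acts on $W^*$ preserving height, that the graded structure gives us a contracting cocharacter, and that a nonconstant homogeneous invariant would have to vanish on a large closed set that the orbit closures force it to vanish on everywhere. First I would observe that the torus $T$ acts on $W^*$ by $t\cdot e_i' = t^{-i}e_i'$, so $k[W^*]$ is graded by the $T$-weights; since $G = T\ltimes U$, any $G$-invariant polynomial is in particular $T$-invariant, and writing an invariant as a sum of $T$-weight components shows each weight component is also $U$-invariant, hence $G$-invariant. So it suffices to show there is no nonconstant $G$-invariant that is homogeneous of pure $T$-weight. A $T$-weight-$0$ component that is nonconstant must involve only $e_0'$ (the unique basis vector of $T$-weight $0$), i.e. be a polynomial in the coordinate $X_0$ alone; but such a polynomial is not $U$-invariant unless constant, since $U$ moves $e_0'$ (look at equation (\ref{ny}) with $i = 0$, or simply note $G(ae_0')$ from Theorem \ref{snart} is a one-dimensional orbit on which $X_0$ is nonconstant). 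Weight components of nonzero $T$-weight vanish at the origin, so it remains to handle those.

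Next I would use the orbit-closure computations to pin down where such an invariant $f$ must vanish. Take any nonconstant homogeneous $G$-invariant $f$; it is constant on $G$-orbits, hence constant on each orbit closure. By Theorem \ref{fu}, for $3 \le i \le p-2$ with $i$ odd, $\overline{G(e_{i-1}' + a e_s')} \supseteq \{\chi \mid r(\chi) \le i-2\}$, and this set contains the zero character. Since $f$ is homogeneous of positive degree it vanishes at $0$, so $f$ is identically zero on $\{\chi\mid r(\chi)\le i-2\}$, and since $f$ is constant on the orbit $G(e_{i-1}'+ae_s')$ it must also vanish there, for every $a$. Letting $i$ run up to $p-2$ (and using the even-height closures from case 2 of Theorem \ref{fu} the same way) shows $f$ vanishes on every character of height $\le p-2$. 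The only characters left are those of height $p-1$, i.e. the orbits $G(e_{p-2}' + a e_{-1}')$. By Proposition \ref{bo}, in either of the two possible cases the closure of such an orbit contains $\{\chi\mid r(\chi)\le p-3\}$, which again contains the origin; so $f$ vanishes on that set, hence is constant equal to $0$ on each orbit $G(e_{p-2}'+ae_{-1}')$ too. Therefore $f \equiv 0$ on all of $W^*$ (the orbits of all heights, together with $0$, cover $W^*$), so $f = 0$, and $k[W^*]^G = k$.

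The one point that needs a little care — and which I regard as the main obstacle — is the reduction to homogeneous invariants of pure $T$-weight and the disposal of the weight-zero piece, since that is the only weight whose component need not vanish at the origin. I would handle it exactly as indicated: the weight-zero subspace of $k[W^*]$ is $k[X_0]$, and on the orbit $G(ae_0')$ (which by the argument analogous to case 2 of Theorem \ref{baner} is all of $\{a e_0' + a_1 e_1' + \cdots\}$ intersected appropriately, or at any rate is a positive-dimensional orbit with $X_0$ nonconstant on it when $a$ varies... more precisely $X_0$ takes the value $a$ on $G(ae_0')$ for every $a\in k^*$), so a polynomial in $X_0$ alone that is $G$-invariant takes the same value at $ae_0'$ and $be_0'$ for all $a,b\in k^*$, forcing it to be constant. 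With that in hand the rest is the straightforward vanishing cascade described above, and no delicate computation is required — one only needs that each orbit closure in Theorems \ref{fu} and Proposition \ref{bo} contains the zero character, which is visible from the explicit descriptions since $\{\chi\mid r(\chi)\le m\} \ni 0$ for every $m \ge -1$.
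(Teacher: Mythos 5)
The main gap is your treatment of the orbits of height $p-1$ with $a\neq 0$. Proposition \ref{bo} does not say that $\overline{G(e_{p-2}'+ae_{-1}')}$ contains $\{\chi\in W^*\mid r(\chi)\leq p-3\}$ ``in either of the two possible cases'': the two alternatives in that proposition concern only $\overline{Ge_{p-2}'}$ (the case $a=0$), and the description (\ref{fi}) of $\overline{G(e_{p-2}'+ae_{-1}')}$ for general $a$ is asserted only \emph{conditionally}, namely under the first alternative, which the paper cannot decide. So for $a\neq 0$ you have not shown that $0$ lies in the orbit closure, and your cascade stops exactly at the point the paper identifies as the hard one. Note also that the obvious torus contraction fails here: since $t\cdot e_i'=t^{-i}e_i'$, the two nonzero coordinates of $e_{p-2}'+ae_{-1}'$ have $T$-weights of opposite sign, so $T$ alone cannot drive this element to $0$. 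The paper closes the gap with a separate direct argument: taking $s=\frac{p-1}{2}$ and $\varphi(x)=x+bx^{s+1}$ with $s(s+1)b^2=a$, one computes $\sigma_\varphi^{-1}(e_{p-2}'+ae_{-1}')=e_{p-2}'-be_{s-1}'$, i.e.\ the $e_{-1}'$-component is removed by a unipotent element; then $\psi(t)=t^{p-2}e_{p-2}'-t^{s-1}be_{s-1}'$ lies in the orbit for $t\neq 0$ and $\psi(0)=0$, so $0\in\overline{G(e_{p-2}'+ae_{-1}')}$. Without this (or an equivalent) step your proof does not cover all of $W^*$.

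A secondary point: your reduction to invariants homogeneous of a single $T$-weight is both unnecessary and, as stated, incorrect. The weight of a monomial $\prod_i X_i^{\alpha_i}$ is $\sum_i i\alpha_i$, so the weight-zero part of $k[W^*]$ is not $k[X_0]$; for instance $X_{-1}X_1$ has weight zero. You can simply drop the whole reduction: for any $f\in k[W^*]^G$, $f$ is constant on each orbit closure, so it suffices (as in the paper) that every character lies in an orbit closure containing $0$, which gives $f(\chi)=f(0)$ for all $\chi$. Your vanishing cascade for heights $\leq p-2$ (via Theorem \ref{fu}) and for $Ge_{p-2}'$ itself (where both alternatives of Proposition \ref{bo} do contain $\{\chi\mid r(\chi)\leq p-3\}\ni 0$) is sound and matches the paper; only the $a\neq 0$ height-$(p-1)$ case is genuinely missing.
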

\begin{proof}
Let $f\in k[W^*]^G$ and $\chi\in W^*$ be arbitrary. We want to prove that $\chi$ is contained in an orbit closure that also contains zero. Since $f$ is constant on orbit closures this would imply $f(\chi)=f(0)$ and $f$ is constant on all of $W^*$. It follows from what we have shown so far that $\chi$ is contained in an orbit closure $\overline{G\chi'}$ where $\chi'$ has height $p-3$, $p-2$ or $p-1$. In the first two cases we already know from Theorem $\ref{fu}$ that $0\in \overline{G\chi'}$, so let us assume that $\chi'=e_{p-2}'+ae_{-1}'$ for some $a\in k$. It is possible to derive the facts we need from the proof of Proposition $\ref{bo}$, but since this is quite technical, we present a more direct approach here (suggested by J. C. Jantzen): Set $s=\frac{p-1}{2}$ and consider the automorphism $\varphi$ of $A(1)$ defined by $\varphi(x)=x+bx^{s+1}$ for some $b\in k$. An easy calculation shows that
\begin{equation*}
\varphi(x^j)=x^j+jbx^{s+j}
\end{equation*} 
\begin{equation*}
\varphi^{-1}(x^j)=x^j-jbx^{s+j}
\end{equation*}
for $1\leq j\leq p-1$. Now we get:
\begin{align*}
\sigma_\varphi(e_j)(x)=&\varphi\circ e_j\circ\varphi^{-1}(x)=\\&\varphi\circ e_j(x-bx^{s+1})=\varphi(x^{j+1}-(s+1)bx^{s+j+1})
\end{align*}
Recall that $x^r=0$ for $r\geq p$. So if $s\leq j\leq p-2$, then $\sigma_\varphi(e_j)(x)=\varphi(x^{j+1})=x^{j+1}$. Similarly, we get $\sigma_\varphi(e_j)(x)=\varphi(x^{j+1}-(s+1)bx^{s+j+1})=x^{j+1}-(s-j)bx^{s+j+1}$ if $0\leq j<s$, and finally
\begin{equation*}
\sigma_\varphi(e_{-1})(x)=\varphi(1-(s+1)bx^s)=1-(s+1)bx^s-s(s+1)b^2x^{p-1}
\end{equation*}  
Using these calculations - and formula ($\ref{diff}$) - we get:
\begin{equation*}
\sigma_\varphi(e_j)=\begin{cases} e_j & \text{if $s\leq j\leq p-2$} \\ e_j-(s-j)be_{s+j} & \text{if $0\leq j<s$} \\ e_{-1}-(s+1)be_{s-1}-s(s+1)b^2e_{p-2} & \text{if $j=-1$} \end{cases}
\end{equation*}
Now it is easy to check that
\begin{equation*}
\sigma_\varphi^{-1}(e_{p-2}')=e_{p-2}'-be_{s-1}'-s(s+1)b^2e_{-1}'
\end{equation*}
If we choose $b$ so that $s(s+1)b^2=a$ we get:
\begin{equation*}
\sigma_\varphi^{-1}(e_{p-2}'+ae_{-1}')=e_{p-2}'-be_{s-1}'
\end{equation*}
As will be seen, the whole point is that no $e_{-1}'$-term appears on the right-hand side. Define a morphism $\psi:\mathbb{A}^1\longrightarrow W^*$ by $\psi(t)=t^{p-2}e_{p-2}'-t^{s-1}be_{s-1}'$. We have $\psi(\mathbb{A}^1\backslash \{0\})=T(\sigma_\varphi^{-1}(\chi'))\subseteq G\chi'$, and since the image of the closure is contained in the closure of the image for any continuous map, we conclude that $0=\psi(0)\in \overline{G\chi'}$.
\end{proof}

Premet states in his paper $\cite{P}$ that "It would be of interest to describe the invariants of the coadjoint representation $W(n)^*$ of $G$", and this is a first step in that direction. 

\begin{proof}[Acknowledgements]
First and foremost I would like to thank my advisor Professor Jens Carsten Jantzen for his many insightful comments and suggestions. I would also like to thank my fellow Ph.d. students Troels Bak Andersen and Jens-Jakob Kratmann Nissen for their support and for numerous interesting and enlightening discussions. 
\end{proof}

\end{document}